\theoremstyle{plain}
\newtheorem{thm}{Theorem}[section]
\newtheorem{theorem}[thm]{Theorem}
\newtheorem{proposition}[thm]{Proposition}
\newtheorem{lemma}[thm]{Lemma}
\newtheorem{fact}[thm]{Fact}
\theoremstyle{definition}
\newtheorem{definition}[thm]{Definition}
\newtheorem{example}[thm]{Example}
\def\@seccntformat#1{\csname the#1\endcsname.\quad}
\def\@seccntformat#1{\csname the#1\endcsname%
\expandafter\ifx\csname#1\endcsname\subsection.\fi\quad}
\newlength{\barlength}
\newcommand{\minusfill}{$\mathsurround=0pt\mathord- \mkern-6mu
   \cleaders\hbox{$\mkern-3mu \mathord- \mkern-3mu$}\hfill
     \mkern-6mu \mathord-$}
\newcommand{\yokobo}{\hbox to 1.2em{\minusfill}}
\newcommand{\equalfill}{$\mathsurround=0pt\mathord= \mkern-6mu
    \cleaders\hbox{$\mkern-3mu \mathord= \mkern-3mu$}\hfill
       \mkern-6mu \mathord=$}
\newcommand{\Longlongrightarrow}
       {\hbox to 2em{\equalfill$\mkern-3mu\Rightarrow$}}
\newcommand{\Longlongleftarrow}
       {\hbox to 2em{$\Leftarrow\mkern-3mu$\equalfill}}
\newcommand{\tsume}{\kern-.35em}
\newlength{\circlength}
\begin{document}
\title{Generating operators
 of symmetry breaking 
 --- from discrete to continuous}
\author{Toshiyuki Kobayashi}

\maketitle

\begin{abstract}
Based on the \lq\lq{generating operator}\rq\rq\ of the Rankin--Cohen brackets
 introduced in Kobayashi--Pevzner 
 [arXiv:2306.16800], 
 we present a method to construct
 various fundamental operators with continuous parameters
 such as invariant trilinear forms
 on infinite-dimensional representations, 
 the Fourier and the Poisson transforms
 on the anti-de Sitter space, 
 and integral symmetry breaking operators
 for the fusion rules,  among others, 
out of  a countable set of differential symmetry breaking operators.  
\end{abstract}

\noindent
\textit{Keywords and phrases:}
generating operator, symmetry breaking operator, Rankin--Cohen bracket, de Sitter space, branching law, Hardy space, holographic operator.  

\medskip
\noindent
\textit{2020 MSC}:
Primary
22E45 
;
Secondary
30H10, 
32A45, 
43A85, 
47B38.  

\section{Introduction}
\label{sec:Intro}

This article is a continuation of \cite{KPgen}, 
 where we initiated 
a new line of investigation 
 on branching problems for the restriction of representations
 through \lq\lq{generating operators}\rq\rq\
 between two manifolds.  
By definition, 
 \lq\lq{generating operators}\rq\rq\
 are built on discrete data, 
 which are a countable family of SBOs 
({\it{symmetry breaking operators}})
 in the case we consider.  
On the other hand, 
 general irreducible decompositions
 such as branching laws or Plancherel-type theorems
 often involve continuous spectrum.  
The novel feature of this article is an introduction
 of a method to {\it{transfer}} differential SBOs
 ({\bf{countable data}}) 
 into a meromorphic family of non-local intertwining operators
 ({\bf{continuous data}}).  
We illustrate the trick by the \lq\lq{generating operators}\rq\rq\
 of the Rankin--Cohen brackets.  
In particular, by using the \lq\lq{boundary values}\rq\rq\ 
 of \lq\lq{generating operators}\rq\rq,
 we are able to treat non-holomorphic representations
 in different geometric settings from
 the initial one.

Another novel feature of this article
 is an application of differential SBOs 
 to construct a geometric embedding
 of discrete series representations
 for the de Sitter space $\operatorname{d S}^2$
 into principal series representations (Theorem \ref{thm:23062904}).  
This is carried out 
 by proving the analytic continuation 
of $L^2$-eigenfunctions
 to the boundary of the conformal compactification.  
We remark 
 that such an analytic continuation does NOT exist 
 for eigenfunctions
 that are not square integrable.  
The proof uses the theory of discretely decomposable restriction
\cite{xkInvent98}.

We illustrate the idea by the generating operator
 of the Rankin--Cohen brackets.  
The results suggest 
 that the application of the \lq\lq{generating operators}\rq\rq\
 of SBOs is already rich 
 in the $SL_2$ case.  
See also \cite{KP24}.  
In a subsequent paper,
 we plan to discuss a generalization
 of some of the aspects discovered here 
 to other reductive groups
 of higher rank.

The article is organized as follows.  
In Section \ref{sec:GSBO}, 
 we explain the geometric setting
 of branching problems 
 for which the \lq\lq{generating operators}\rq\rq\
 make sense by taking the general fusion rules
 as an example. 
Section \ref{sec:SL} discusses a trick from 
 \lq\lq{discrete}\rq\rq\ to \lq\lq{continuous}\rq\rq\
 via the generating operators, 
 for which the representation theoretic setting also changes from 
 \lq\lq{discretely decomposable restrictions}\rq\rq\
 to \lq\lq{branching laws with continuous spectrum}\rq\rq.  
Sections \ref{sec:dS} and \ref{sec:5}
 extend the results 
beyond symmetry breaking 
 through the conformal compactification.  

\section{Generalities: generating operators for SBOs}
\label{sec:GSBO}

This section reveals a representation-theoretic background
 of the closed formula
 of the generating operators
 \cite{KPgen}
 in a specific setting, 
 and investigates a more general setting
 in branching problems
 for which we could expect a further detailed study
 of the \lq\lq{generating operators}\rq\rq.  
The key requirements are discrete decomposability
 and multiplicities 
 of the restriction.  
For simplicity, 
 we confine ourselves 
 to the tensor product case.

\subsection{Generating operators for symmetry breaking operators}
Let $X$ and $Y$ be two manifolds.  
For a family of linear maps
 $R_{\ell} \colon \Gamma(X) \to \Gamma(Y)$
 between the spaces of functions on $X$ and $Y$, 
 the \lq\lq{generating operator}\rq\rq\
 $T$ is defined as a $\operatorname{Hom}_{\mathbb{C}}(\Gamma(X), \Gamma(Y))$-valued
 formal power series of $t$, 
 see \cite{KPgen}:
\begin{equation}
\label{eqn:genop}
  T=\sum_{\ell=0}^{\infty} \frac{R_{\ell}}{\ell!} t^{\ell}
  \in \operatorname{Hom}_{\mathbb{C}}(\Gamma(X), \Gamma(Y)) \otimes {\mathbb{C}}[[t]].  
\end{equation}

Very special cases of the generating operators  include 
 the classical notion
 of the generating functions
 of orthogonal polynomials
 and the semigroups generated 
 by self-adjoint operators
 {\it{e.g.,}} the Hille--Yosida theory.

Suppose that a pair of groups $\widetilde G \supset G$ act on $X \supset Y$, 
respectively, 
 and that $R_{\ell} \colon \Gamma(X) \to \Gamma(Y)$
 $(\ell \in {\mathbb{N}})$ are a family of SBOs 
 ({\it{symmetry breaking operators}}), 
 {\it{i.e.}}, 
 each $R_{\ell}$ is a $G$-intertwining operator
into a multiplier representation $(\pi_{\ell}, \Gamma(Y))$ of $G$.  
We are particularly interested in the setting
 where the $\widetilde G$-module $\Gamma(X)$ decomposes {\bf{discretely}} 
 into irreducible representations $\pi_{\ell}$ of $G$
 with {\bf{bounded multiplicity}}.

In the rest of this section, 
 we summarize briefly some recent developments 
 about when such settings arise, 
 with focus on the case 
 of tensor product representations 
 of $G$, 
 in other words, 
 the case where $\widetilde G=G \times G$.

\subsection{Generalities: Multiplicity of the tensor product}
~~~
\newline
In defining the generating operator of SBOs, 
 it would be natural to impose some control of {\it{multiplicities}}
 in the branching laws.

Let $G$ be a real reductive Lie group, 
 ${\mathcal{M}}(G)$ the category
 of finitely generated, 
 smooth admissible representations of $G$
 of moderate growth, 
 see \cite[Chap.\ 11]{xWaII}, 
 and $\operatorname{Irr}(G)$ the set of irreducible objects in ${\mathcal{M}}(G)$.  

\begin{definition}
[multiplicity]
For $\pi_1, \pi_2, \tau \in \operatorname{Irr}(G)$, 
 the multiplicity of $\tau$
 in the tensor product representation $\pi_1 \otimes \pi_2$ is defined by 
\[
[\pi_1 \otimes \pi_2:\tau]
:=
\dim_{\mathbb{C}} 
{\operatorname{Hom}_{G}
(\pi_1 \otimes \pi_2, \tau)}
\in {\mathbb{N}} \cup \{\infty\}, 
\]
where $\operatorname{Hom}_G(\,,\,)$ denotes the space
 of SBOs 
 ({\it{i.e.,}} continuous
\newline
 $G$-homomorphisms)
 between the Fr{\'e}chet representations.  
\end{definition}

The finiteness condition 
 of the multiplicity $[\pi_1 \otimes \pi_2: \tau]$
 gives a strong constraint on the group $G$:

\begin{fact}
[{\cite{Ksuron}, see also \cite{xKMt}}]
\label{fact:2.2}
The following three conditions on a non-compact simple Lie group $G$
 are equivalent.  
\\
{\rm{(i)}}\enspace
$[\pi_1 \otimes \pi_2:\tau]<\infty$
\quad
for any $\pi_1$, $\pi_2$, $\tau \in \operatorname{Irr}(G)$.  
\\
{\rm{(ii)}}\enspace
The triple product of real flag varieties $G/P$ is real spherical.  
\\
{\rm{(iii)}}\enspace
${\mathfrak{g}} \simeq {\mathfrak{s o}}(n,1)$.  
\end{fact}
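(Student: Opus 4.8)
The plan is to treat the three representations symmetrically, translate the finite-multiplicity statement into a geometric condition on real flag varieties, and then invoke a classification.

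First I would reduce (i) to the finiteness of invariant trilinear forms. Using contragredients and the standard adjunction, for $\pi_1,\pi_2,\tau\in\operatorname{Irr}(G)$ there is a natural isomorphism
\[
\operatorname{Hom}_G(\pi_1\otimes\pi_2,\tau)\cong\operatorname{Hom}_G(\pi_1\otimes\pi_2\otimes\tau^{\vee},\mathbb{C}),
\]
so that $[\pi_1\otimes\pi_2:\tau]$ equals the dimension of the space of $G$-invariant trilinear forms on $\pi_1\times\pi_2\times\tau^{\vee}$. Hence (i) is equivalent to the finiteness, for all triples, of $\dim\operatorname{Hom}_{\Delta G}(\Pi|_{\Delta G},\mathbb{C})$, where $\Pi=\pi_1\boxtimes\pi_2\boxtimes\tau^{\vee}\in\operatorname{Irr}(\widetilde G)$ with $\widetilde G=G\times G\times G$ and $\Delta G$ the diagonal. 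This symmetric reformulation, with trivial target, is what makes the triple product rather than a single $G/P$ the natural geometric object.

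Next, for (i) $\Leftrightarrow$ (ii), I would invoke the geometric criterion for finite multiplicity in branching problems (the finite-multiplicity theorem for restriction): for a reductive pair $\widetilde G\supset\Delta G$, the bound $\dim\operatorname{Hom}_{\Delta G}(\Pi|_{\Delta G},\mathbb{C})<\infty$ holds for all $\Pi\in\operatorname{Irr}(\widetilde G)$ if and only if the homogeneous space $\widetilde G/\Delta G$ is real spherical, that is, the diagonal subgroup $\Delta G$ has an open orbit on the real flag variety $\widetilde G/\widetilde P$. Since $\widetilde P=P\times P\times P$ is a minimal parabolic of $\widetilde G=G\times G\times G$, we have $\widetilde G/\widetilde P=(G/P)^3$, so the criterion identifies (i) with the existence of an open $\Delta G$-orbit on the triple product $(G/P)^3$, which is condition (ii). Applying this to the family $\Pi=\pi_1\boxtimes\pi_2\boxtimes\tau^{\vee}$ completes this equivalence.

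Finally, (ii) $\Leftrightarrow$ (iii) is a classification of the simple $G$ for which $(G/P)^3$ carries an open diagonal orbit. A first pass is a dimension count: an open $\Delta G$-orbit forces $3\dim(G/P)\le\dim G$, which already eliminates every group of higher real rank and the smallest complex and quaternionic rank-one cases. For the remaining real rank-one groups one analyzes the orbits directly. When $\mathfrak g\simeq\mathfrak{so}(n,1)$ the flag variety $G/P$ is a sphere on which $G$ acts as the conformal (Möbius) group, and the abelian structure of the nilradical of $P$ allows one to exhibit an open orbit on triples of points, so $(G/P)^3$ is real spherical; when $\mathfrak g\simeq\mathfrak{su}(n,1),\ \mathfrak{sp}(n,1)$ or $\mathfrak{f}_{4(-20)}$ the nilradical is of Heisenberg type, and a finer orbit analysis shows that no open diagonal orbit exists. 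I expect this last step to be the main obstacle: the dimension estimate is only necessary, and ruling out the exceptional and complex/quaternionic rank-one families requires the explicit structure of $\Delta G$-orbits on $(G/P)^3$ rather than a soft argument. The resulting list singles out exactly $\mathfrak g\simeq\mathfrak{so}(n,1)$, as recorded in \cite{Ksuron, xKMt}.
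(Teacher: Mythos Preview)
The paper does not prove Fact~\ref{fact:2.2}; it is quoted as a known result from \cite{Ksuron} and \cite{xKMt}, so there is no in-paper argument to compare against. Your outline is essentially the strategy underlying those references: reduce the multiplicity question to a geometric finiteness criterion involving open orbits on real flag varieties, and then classify.

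Two remarks on the execution. First, the passage through invariant trilinear forms is correct and has the virtue of making the $S_3$-symmetry visible, but one can reach $(G/P)^3$ more directly by applying the finite-multiplicity criterion of Kobayashi--Oshima to the pair $(\widetilde G,G')=(G\times G,\operatorname{diag}G)$: the condition that $\dim\operatorname{Hom}_{G'}(\pi|_{G'},\tau)<\infty$ for all $\pi\in\operatorname{Irr}(\widetilde G)$ and $\tau\in\operatorname{Irr}(G')$ is that $G'$ have an open orbit on $\widetilde G/\widetilde P\times G'/P'=(G/P)^2\times(G/P)=(G/P)^3$. This avoids having to invoke the version of the criterion for the trivial target alone on the triple group $G\times G\times G$, which, as you use it, is valid but is a slightly different statement than the general branching criterion. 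Second, your treatment of (ii)$\Leftrightarrow$(iii) is accurate in spirit: the dimension inequality $3\dim(G/P)\le\dim G$ disposes of all higher-rank cases, and among real rank-one groups the case-by-case orbit analysis on $(G/P)^3$ (carried out in \cite{xKMt}, and implicit in \cite{Ksuron}) singles out $\mathfrak{so}(n,1)$; you correctly flag that the elimination of $\mathfrak{su}(n,1)$, $\mathfrak{sp}(n,1)$, $\mathfrak{f}_{4(-20)}$ is not a soft argument and constitutes the real work.
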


The proof includes 
 that the tensor product $\pi_1 \otimes \pi_2$
 is of infinite multiplicity
 for \lq\lq{generic representations}\rq\rq\ $\pi_1$ and $\pi_2$
 except 
 when ${\mathfrak{g}} \simeq {\mathfrak{so}}(n,1)$.  
In contrast, 
 if $\pi_1$ and $\pi_2$ are \lq\lq{sufficiently small}\rq\rq\ infinite-dimensional representations of $G$, 
 the multiplicity in $\pi_1 \otimes \pi_2$ may stay finite, 
 see \cite{mf-korea, TK22, TK23}
 for precise formulation.  
In particular, 
 one has:
\begin{fact}
[{\cite{TK23}}]
\label{fact:bddtensor}
For any 1-connected non-compact simple Lie group $G$, 
 there always exist infinite-dimensional
 irreducible representations $\pi_1$, $\pi_2$ of $G$
 such that $\pi_1 \otimes \pi_2$ is of uniformly bounded multiplicity:\begin{equation}
\label{eqn:bddtensor}
\underset{\tau \in \operatorname{Irr}(G)}{\operatorname{sup}}
[\pi_1 \otimes \pi_2:\tau] < \infty.  
\end{equation}
\end{fact}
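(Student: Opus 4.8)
The plan is to read \eqref{eqn:bddtensor} as a statement about the restriction of the outer tensor product $\pi_1 \boxtimes \pi_2 \in \operatorname{Irr}(G \times G)$ to the diagonal subgroup $\operatorname{diag}(G) \cong G$, and to force the uniform bound by making the associated varieties of $\pi_1$ and $\pi_2$ as small as possible. By Fact~\ref{fact:2.2}, no such bound can hold for \emph{all} $\pi_1, \pi_2$ once $\mathfrak g \not\simeq \mathfrak{so}(n,1)$, so the whole point is that smallness of $\mathcal{AV}(\pi_1)$ and $\mathcal{AV}(\pi_2)$ cuts the relevant geometry down to a spherical one even when the ambient triple flag variety is not. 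I would therefore set up the standard dictionary in which finiteness, and then uniform boundedness, of the restriction multiplicities for $(G \times G) \downarrow \operatorname{diag}(G)$ is controlled by the real, respectively complex, sphericity of an associated homogeneous space, and aim to verify the sphericity not on all of $\mathfrak g^{*}$ but only along the small nilpotent varieties carrying $\pi_1$ and $\pi_2$.

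For the construction I would proceed in two regimes. When $G$ is of Hermitian type the clean prototype---already visible in the Rankin--Cohen case underlying this note---is to take $\pi_1, \pi_2$ to be unitary highest weight modules (holomorphic discrete series, or one holomorphic and one anti-holomorphic); their tensor products decompose with multiplicity one or with uniformly bounded multiplicity by the classical theory of the associated bounded symmetric domain. For general, possibly non-Hermitian, $G$ the uniform substitute is parabolic induction: fix a maximal parabolic $P = LU$ with $U$ as large as possible, and let $\pi_1$ be a degenerate principal series $\operatorname{Ind}_P^G(\chi)$, whose associated variety is the closure $\overline{\mathcal O_P}$ of a single Richardson orbit, and $\pi_2$ its contragredient. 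Where a genuine minimal representation is available one may instead take $\pi$ and $\pi^{\vee}$ with $\mathcal{AV}$ the minimal orbit closure, which is smaller still; the hypothesis that $G$ be $1$-connected is exactly what guarantees that such small, possibly genuine, representations integrate to $G$.

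The verification then rests on a wave-front/associated-variety computation. Because $\operatorname{diag}\colon \mathfrak g \hookrightarrow \mathfrak g \oplus \mathfrak g$ is dual to the sum map $(\xi, \eta) \mapsto \xi + \eta$, the associated variety of $\pi_1 \otimes \pi_2$ as a $G$-module lies in the Minkowski sum $\overline{\mathcal O_P} + \overline{\mathcal O_P} \subset \mathfrak g^{*}$. I would then invoke the bound that $\sup_\tau [\pi_1 \otimes \pi_2 : \tau]$ is finite as soon as the diagonal $G_{\mathbb C}$-action on this product of nilpotent orbit closures is coisotropic---equivalently, a Borel subgroup of $G$ acts with an open orbit---so that the multiplicities are governed by finitely many closed orbits rather than by a continuous family. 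For orbits small enough this coisotropy holds, and \eqref{eqn:bddtensor} follows.

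The main obstacle is the \emph{uniformity} of this last step across all Cartan types. Reducing finiteness to real sphericity is routine, but upgrading to a bound uniform in $\tau$ demands the stronger complex sphericity of the diagonal action on $\overline{\mathcal O_P} + \overline{\mathcal O_P}$, and this has to survive for every simple $\mathfrak g$---including the types for which no representation is attached to the minimal orbit and one is forced to work with a Richardson orbit appreciably larger than minimal. Checking that the coisotropy, and not merely a dimension count, persists in these residual cases is where I expect the real difficulty to lie, and I would anticipate that the split exceptional forms require a separate, case-by-case analysis rather than a single type-free argument.
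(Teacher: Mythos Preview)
The paper does not prove this statement at all: it is recorded as a \emph{Fact} with a citation to \cite{TK23} and no argument is given in the body of the paper. There is therefore nothing in the present paper to compare your proposal against.

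That said, your outline is broadly in the spirit of the cited work: translating \eqref{eqn:bddtensor} into a branching problem for $(G\times G,\operatorname{diag}G)$, controlling multiplicities via associated varieties, and reducing uniform boundedness to a (complex) sphericity/coisotropy condition on the relevant nilpotent orbit closures is exactly the framework developed in \cite{TK22,TK23,mf-korea}. Your two regimes (Hermitian via highest weight modules, general via degenerate principal series or minimal representations) are also the right prototypes. What you have written is, however, a strategy rather than a proof: the step ``for orbits small enough this coisotropy holds'' is the entire content of the theorem, and you correctly flag in your final paragraph that verifying it uniformly---in particular for the split exceptional types where no minimal representation is available and the smallest accessible orbit is appreciably larger---is not a formality. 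In \cite{TK23} this is handled by a classification-dependent analysis, not by a type-free argument, so your anticipated case-by-case check is unavoidable and is where the actual work resides.
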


\subsection{Generalities: Discrete decomposability of restriction}
~~~
\newline
Another important requirement in defining 
 the \lq\lq{generating operator}\rq\rq\
 of SBOs
 is the discrete decomposability
 of the restriction.  
Applying the general criterion 
 \cite{xkInvent94, xkAnn98, xkInvent98}
 to the tensor product case, 
 one has  

\begin{fact}
\label{fact:disctensor}
Let $\pi_1$, $\pi_2$ be two infinite-dimensional irreducible representations
 of a simple Lie group $G$.  
\begin{enumerate}
\item[{\rm{(1)}}]
{\rm{({\cite[Thm.\ 6.1]{xkyosh13}})}}\enspace
The following two conditions on the triple
\\
 $(G, \pi_1, \pi_2)$
 are equivalent:
\\
{\rm{(i)}}\enspace
$\pi_1 \otimes \pi_2$ is discretely decomposable.  
\\
{\rm{(ii)}}\enspace
$G/K$ is a Hermitian symmetric space
 and $\pi_1$, $\pi_2$ are
simultaneously
 highest (or lowest) weight modules.  

\item[{\rm{(2)}}]
{\rm{({\cite{mf-korea}})}}\enspace
If one of (therefore both of) these conditions
 is satisfied, 
 then the uniformly bounded multiplicity property \eqref{eqn:bddtensor} holds.  \end{enumerate}
\end{fact}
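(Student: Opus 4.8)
The plan is to treat the two assertions separately, in both cases viewing the tensor product as the restriction of the outer tensor product $\pi_1 \boxtimes \pi_2 \in \operatorname{Irr}(G \times G)$ to the diagonal subgroup $\Delta G \subset G \times G$, and then to apply the discrete decomposability criteria of \cite{xkInvent94, xkInvent98}. Write $\mathfrak{g} = \mathfrak{k} \oplus \mathfrak{p}$ for a Cartan decomposition, fix a maximal abelian $\mathfrak{t} \subset \mathfrak{k}$, and recall that the invariant attached to an irreducible $\pi$ is its asymptotic $K$-support $\operatorname{ASK}(\pi)$, a closed cone in a dominant chamber of $\sqrt{-1}\,\mathfrak{t}^*$. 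For the pair $\Delta K \subset K \times K$ the orthogonal complement of $\Delta\mathfrak{t}$ is the anti-diagonal $\{(\xi,-\xi)\}$, so that Kobayashi's admissibility criterion, applied to $\pi_1 \boxtimes \pi_2$ with $\operatorname{ASK}(\pi_1\boxtimes\pi_2)=\operatorname{ASK}(\pi_1)\times\operatorname{ASK}(\pi_2)$, specializes to the single cone condition
\[
\operatorname{ASK}(\pi_1) \cap \bigl(-\operatorname{ASK}(\pi_2)\bigr) = \{0\}
\qquad \text{in } \sqrt{-1}\,\mathfrak{t}^*.
\]
Thus the whole statement reduces to understanding exactly when this cone condition can hold.

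For the implication (ii)$\,\Rightarrow\,$(i) I would argue directly, without the cone machinery. When $G/K$ is Hermitian, $\mathfrak{z}(\mathfrak{k}) = \mathbb{R}Z$ is one-dimensional, and a lowest-weight module has $Z$-spectrum bounded below with finite-dimensional eigenspaces. On $\pi_1 \otimes \pi_2$ the central element $Z \in \mathfrak{z}(\Delta\mathfrak{k})$ then acts with spectrum bounded below and with finite-dimensional eigenspaces, since each eigenspace is a finite sum $\bigoplus_{a+b=\lambda}(\pi_1)_a \otimes (\pi_2)_b$. As every $\Delta K$-type has a definite $Z$-scalar and hence sits inside one such eigenspace, this forces $\Delta K$-admissibility; by \cite{xkAnn98}, $K'$-admissibility implies discrete decomposability, which gives (i). The case of two highest-weight modules is identical after replacing $Z$ by $-Z$.

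The converse (i)$\,\Rightarrow\,$(ii) is where the real work lies, and here the cone condition does the job. The key structural input is the dichotomy for $\operatorname{ASK}(\pi)$: for an infinite-dimensional irreducible $\pi$, the cone $\operatorname{ASK}(\pi)$ is contained in an open half-space $\{\langle\cdot, Z\rangle > 0\}\cup\{0\}$ (resp. $\{\langle\cdot, Z\rangle < 0\}\cup\{0\}$) precisely when $\mathfrak{z}(\mathfrak{k})\neq 0$ and $\pi$ is a lowest- (resp. highest-) weight module, and otherwise $\operatorname{ASK}(\pi)$ meets its own antipode. The main obstacle, and the step I expect to absorb most of the effort, is establishing this dichotomy: one must relate $\operatorname{ASK}(\pi)$ to the associated variety $\mathcal{V}_{\mathfrak{g}}(\pi)$ and the wavefront set, and invoke Harish-Chandra's classification of holomorphic (extreme-weight) representations to pin down exactly the half-space-supported ones. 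Granting it, if $G/K$ is not Hermitian then no $\operatorname{ASK}(\pi_i)$ is half-space-supported, so $\operatorname{ASK}(\pi_1)\cap(-\operatorname{ASK}(\pi_2))\neq\{0\}$ and (i) fails; and if $G/K$ is Hermitian but the two modules are not simultaneously of the same extreme type, then $\operatorname{ASK}(\pi_1)$ and $-\operatorname{ASK}(\pi_2)$ land in a common half-space and again intersect nontrivially, so discrete decomposability forces the simultaneous highest-/lowest-weight condition.

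Finally, part (2) is read off from the situation produced by (1). Once $G/K$ is Hermitian and $\pi_1,\pi_2$ are, say, both lowest-weight modules, their associated varieties lie in the closure of the single holomorphic nilpotent orbit $\operatorname{Ad}(K_{\mathbb{C}})\mathfrak{p}^+$, with $\mathfrak{p}^+$ abelian. The uniformly bounded multiplicity in \eqref{eqn:bddtensor} then follows from the finiteness of orbits of the $K_{\mathbb{C}}$-action governing the branching, exactly as in the visible-action/propagation-of-multiplicity-freeness argument of \cite{mf-korea}; I would invoke that theorem rather than reprove it. The expected hard point thus remains the cone dichotomy in the converse direction, everything else being either a direct central-character estimate or an appeal to the cited structural results.
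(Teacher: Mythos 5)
First, a point of comparison: the paper does not prove this statement at all --- it is quoted as a Fact, with part (1) attributed to \cite[Thm.\ 6.1]{xkyosh13} and part (2) to \cite{mf-korea}. Your sketch of (ii) $\Rightarrow$ (i) via the spectrum of the central element $Z \in \mathfrak{z}(\mathfrak{k})$, and your treatment of part (2) by invoking \cite{mf-korea}, are consistent with how those sources actually argue, and I have no objection there.

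The genuine gap is in your plan for (i) $\Rightarrow$ (ii). You reduce ``the whole statement'' to the cone condition $\operatorname{ASK}(\pi_1) \cap (-\operatorname{ASK}(\pi_2)) = \{0\}$, but the criterion of \cite{xkAnn98} gives this only as a \emph{sufficient} condition for $\Delta K$-admissibility (hence for discrete decomposability); the converse implication --- that mere discrete decomposability of $\pi_1 \otimes \pi_2$ forces the cone condition --- is not a theorem you can invoke, and the equivalence of discrete decomposability with $K'$-admissibility is itself not known in general. As a result, your entire necessity argument hangs on the ``ASK dichotomy,'' which you rightly flag as the hard point but which is not available in the literature in the form you need. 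The proof of necessity in \cite{xkyosh13} runs instead through the associated-variety criterion of \cite{xkInvent98}: discrete decomposability of the underlying $(\mathfrak{g},K)$-module forces the image of $\mathcal{V}_{\mathfrak{g}}(\pi_1) \times \mathcal{V}_{\mathfrak{g}}(\pi_2)$ under the projection dual to $\Delta\mathfrak{g} \hookrightarrow \mathfrak{g}\oplus\mathfrak{g}$, i.e.\ the sum $\mathcal{V}_{\mathfrak{g}}(\pi_1) + \mathcal{V}_{\mathfrak{g}}(\pi_2)$, to lie in the nilpotent cone. One then shows that the sum of two nonzero $K_{\mathbb{C}}$-stable cones of nilpotent elements of $\mathfrak{p}_{\mathbb{C}}$ can stay nilpotent only when $\mathfrak{g}$ is of Hermitian type and both cones lie in the closure of the same orbit $\operatorname{Ad}(K_{\mathbb{C}})\mathfrak{p}^{\pm}$, which is precisely the condition that $\pi_1$, $\pi_2$ be simultaneously highest (or lowest) weight modules. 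Replacing your ASK dichotomy with this associated-variety argument closes the gap and recovers the cited proof.
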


The representations $\pi_1$ and $\pi_2$ in Fact \ref{fact:disctensor}
 can be realized
 in the holomorphic category, 
 see {\it{e.g.,}} \cite{mf-korea},
 for which structural results 
 of SBOs are investigated in \cite{KP1}
 such as the {\it{localness theorem}} 
 and the {\it{extension theorem}}.

In \cite{KPgen}, 
 the generating operators of SBOs are explored
 in a special case
 of the general framework of discretely decomposable restrictions 
 with bounded multiplicities, 
 as discussed in Fact \ref{fact:2.2}
 and Fact \ref{fact:disctensor}.

\section{From Discrete data to continuous data}
\label{sec:SL}
This section illustrates 
 by an $SL_2$ example
 how the generating operators transfer 
 {\bf{discrete data}}
 into {\bf{continuous data}}.  
The diagram
\[
   \{R_{\ell}\}_{\ell \in {\mathbb{N}}}
   \,\,\dashrightarrow\,\,
   T
   \,\,\dashrightarrow\,\,
   T_{\mu}^{\pm}, {\mathcal{P}}_{\lambda}^{\pm}, {\mathcal{F}}_{\lambda}^{\pm}
\]
indicates that the closed formula  \eqref{eqn:Luminy230314}
 of the {\it{generating operator}} $T$
 of the Rankin--Cohen brackets
 $\{R_{\ell}\}_{\ell \in {\mathbb{N}}}$ is a key
 to reproduce explicit formul{\ae} of various families
 of non-local intertwining operators
 with continuous parameter
 such as

$\bullet$\enspace
symmetry breaking operators $T_{\mu}^{\pm}$ for the fusion rule
 of the Hardy spaces
 (or invariant trilinear forms)
 (Proposition \ref{prop:23070233});

$\bullet$\enspace
Poisson transforms ${\mathcal{P}}_{\lambda}^{\pm}$
 for the de Sitter space 
 (Proposition \ref{prop:23070306});

$\bullet$\enspace
Fourier transforms ${\mathcal{F}}_{\lambda}^{\pm}$
 on the de Sitter space 
 (Proposition \ref{prop:23070311}).  

\vskip 1pc
We note that these intertwining operators are already known, 
 {\it{e.g.,}} in \cite{xksbonvec, toshima}
 in a more general setting
 by other approaches.  
The novelty here is 
 that the distribution kernels of these non-local operators are 
 explicitly reconstructed from a countable family of differential operators
 on a different geometry 
 through the {\it{generating operator}} $T$.  
It should be noted 
 that this is {\it{opposite}}
 to the usual direction 
such as taking the residues 
 of the meromorphic family of non-local operators.

In this article, 
 we focus on this new trick, 
 and omit the proof of some standard statements
 such as the meromorphic continuation 
or the covariance property, 
 which can be proven by existing techniques, 
 {\it{e.g.,}} \cite{CKOP, xksbonvec, toshima}.  
Our approach is formulated by viewing elements
 in principal series representations
 as local cohomologies, 
 or \lq\lq{boundary values}\rq\rq\
 of holomorphic functions
 in the spirit of Sato's hyperfunctions
 \cite{xsato}.

\subsection{Preliminaries: Principal series representations of $G$}
~~~
\newline
We fix some notation for representations
 of $G=S L(2,{\mathbb{R}})$.  
Take a minimal parabolic subgroup $P$ 
 to be the set of lower triangular matrices, 
 and define characters
 $\chi_{\lambda}^+$
 and $\chi_{\lambda}^-$ of $P$, respectively, 
 by 
\[
   \text{$\chi_{\lambda}^+\begin{pmatrix} a & 0 \\ c & a^{-1} \end{pmatrix} 
   :=|a|^{-\lambda}$, 
\quad
  $\chi_{\lambda}^-\begin{pmatrix} a & 0 \\ c & a^{-1} \end{pmatrix} 
   := |a|^{-\lambda}\operatorname{sgn}a$.}
\]
Let ${\mathcal{L}}_{\lambda}\equiv{\mathcal{L}}_{\lambda}^+$ and 
 ${\mathcal{L}}_{\lambda}^-$
 be the homogeneous line bundles over $G/P$
 associated to the characters $\chi_{\lambda}^+$ and $\chi_{\lambda}^-$, 
 respectively.  
The natural action of $G$ 
 on $C^{\infty}(G/P, {\mathcal{L}}_{\lambda}^{\pm})$
 defines the principal series representations.  
By using the Bruhat decomposition, 
 they are expressed as the multiplier representations:
 for $\varepsilon \in \{+, -\}\equiv \{1, -1\}$, 
\begin{equation}
\label{eqn:ps}
  (\varpi_{\lambda}^{\varepsilon}(g)f)(x)=|cx+d|^{-\lambda}\operatorname{sgn}(cx+d)^{\frac{1-\varepsilon}{2}}f(\frac{ax+b}{cx+d})
\end{equation}
for $g^{-1}=\begin{pmatrix} a & b \\ c & d \end{pmatrix}$.

\subsection{Generating operator for Rankin--Cohen brackets}
~~~
\newline
Let $Q(\zeta_1, \zeta_2;z, t)$ be a holomorphic function 
 of four variables
 given by 
\begin{equation}
\label{eqn:Q}
   Q(\zeta_1, \zeta_2;z, t):=(\zeta_1-z)(\zeta_2-z)+t(\zeta_1-\zeta_2).  
\end{equation}
In \cite{KPgen}, 
 we introduced an integral transform
$
   T \colon {\mathcal{O}}({\mathbb{C}}^2) \to {\mathcal{O}}({\mathbb{C}}^2)
$
 by 
\begin{equation}
\label{eqn:Luminy230314}
(T f)(z,t):=
\frac{1}{(2 \pi \sqrt{-1})^2}\oint_{C_1}\oint_{C_2} 
\frac{f(\zeta_1, \zeta_2)}{Q(\zeta_1, \zeta_2;z, t)} d \zeta_1 d \zeta_2, 
\end{equation}
where $C_j$ are sufficiently small contours
 around the point $z$ ($j=1,2$).  
It is proven in \cite[Thm.\ 2.3]{KPgen}
 that $T$ is the \lq\lq{generating operator}\rq\rq\
 of the family of the Rankin--Cohen brackets $\{R_{\ell}\}_{\ell \in {\mathbb{N}}}$, 
 see \cite{xRa56}, 
 namely, 
\begin{equation}
\label{eqn:TRC}
   (T f)(z,t)=\sum_{\ell =0}^{\infty} \frac{t^{\ell}}{\ell !}R_{\ell} f(z)
\quad
  \text{for any $f \in {\mathcal{O}}({\mathbb{C}}^2)$, }  
\end{equation}
 where 
$
  R_{\ell} \colon {\mathcal{O}}({\mathbb{C}}^2) \to 
  {\mathcal{O}}({\mathbb{C}})
$,
 $f(\zeta_1, \zeta_2) \mapsto (R_{\ell}f)(z)$
 is defined by 
\begin{equation}
\label{eqn:RC}
  (R_{\ell} f)(z):=\sum_{j=0}^{\ell} (-1)^j \left(\ell \atop j\right)^2 
 \left. \frac{\partial^{\ell} f(\zeta_1, \zeta_2)}{\partial \zeta_1^{\ell-j} \partial \zeta_2^j}\right|_{\zeta_1=\zeta_2=z}
\quad
\text{for $\ell \in {\mathbb{N}}$.  }
\end{equation}

The operators $\{R_{\ell}\}_{\ell \in {\mathbb{N}}}$ are differential SBO
 for the fusion rule 
of the two Hardy spaces, 
see \cite{vDP, KPgen, xRa56} for instance.

In this case, 
 the formal power series \eqref{eqn:TRC} converges uniformly
 on any compact set in ${\mathbb{C}}^2$.  
Conversely, 
 every operator $R_{\ell}$ ($\ell \in {\mathbb{N}}$) is recovered readily from the generating operator $T$ by
\begin{equation}
\label{eqn:tTRl}
R_{\ell}=\left.\left(\frac{\partial}{\partial t}\right)^{\ell}\right|_{t=0}
 \circ T.  
\end{equation}

\subsection{From \lq\lq{discrete}\rq\rq\ to \lq\lq{continuous}\rq\rq}
~~~
\newline
This section defines a \lq\lq{meromorphic extension}\rq\rq\ 
 of the countable family $\{R_{\ell}\}$ 
 of operators 
 in the spirit of fractional calculus.  
We construct operators $T_{\mu}^{\pm}$
 that depend meromorphically on $\mu$
 and the residue operator 
 is equal to $R_{\ell}$ 
 up to scalar multiplication for every $\ell \in {\mathbb{N}}$, 
 see \eqref{eqn:restT}.

We begin by recalling a classical fact that 
\[
  t_+^{\mu}:=
  \begin{cases}
  t^{\mu}\quad&(t>0)
\\
  0&(t \le 0), 
  \end{cases}
\qquad
t_-^{\mu}:=(-t)_+^{\mu}
\]
 are locally integrable functions on ${\mathbb{R}}$ 
 for $\operatorname{Re} \mu>-1$, 
 and extend to tempered distributions
 which depend meromorphically on $\mu \in {\mathbb{C}}$.  
Their poles are all simple and have the following residues:
\begin{equation}
\label{eqn:23072202}
   \left.\left(\frac{\partial}{\partial t}\right)^{\ell}\right|_{t=0}
  =\left.\frac{(-1)^{\ell}}{\Gamma(\mu+1)} t_+^{\mu}\right|_{\mu=-\ell-1}
  =\ell! \underset{\mu=-\ell-1}{\operatorname{res}} t_+^{\mu}.  
\end{equation}

Let $f(\zeta_1, \zeta_2) \in {\mathcal{O}}({\mathbb{C}}^2)$.  
Inspired by the fomul{\ae}
 \eqref{eqn:tTRl} and \eqref{eqn:23072202}, 
 we define a \lq\lq{meromorphic continuation}\rq\rq\
 of $(R_{\ell}f)(z)$ by setting 
\begin{align}
\notag
  (T_{\mu}^{\pm}f)(z)
  :=&
   \langle t_{\pm}^{\mu}, T f(z,t) \rangle
\\
\label{eqn:Tmupm}
=&\frac{1}{(2 \pi \sqrt{-1})^2}
 \int_{{\mathbb{R}}}t_{\pm}^{\mu}
 \left(\oint_{C_1} \oint_{C_2} \frac{ f (\zeta_1, \zeta_2)}{Q(\zeta_1, \zeta_2;z, t)} d \zeta_1 d \zeta_2\right)
  d t.
\end{align}

Our integral formula \eqref{eqn:Luminy230314} of the generating operator $T$
 is formulated originally in the holomorphic category.  
We now interpret principal series representations
via local cohomologies of holomorphic functions
 ({\it{e.g.,}} \lq\lq{boundary values}\rq\rq\ in the one variable case).  
We proceed by changing the order of the integration
 in \eqref{eqn:Tmupm}.  
We set 
\begin{equation}
\label{eqn:23070230}
 K_{\pm}^{\mu}(\zeta_1, \zeta_2; \zeta)
:=\left(\frac{(\zeta_1-\zeta)(\zeta_2-\zeta)}{\zeta_1-\zeta_2}\right)_{\pm}^{\mu}
\end{equation}
as hyperfunctions
 depending meromorphically on $\mu \in {\mathbb{C}}$.  
By Lemma \ref{lem:23062411} in Appendix, 
 we have:
\begin{lemma}
\label{lem:23062413}
One has
\begin{equation*}
\langle t_{\pm}^{\mu}, \frac{1}{Q(\zeta_1, \zeta_2; \zeta, t)}\rangle
=
\frac{-2 \pi \sqrt{-1}}{\zeta_1-\zeta_2}
K_{\mp}^{\mu}(\zeta_1, \zeta_2; \zeta).  
\end{equation*}

\end{lemma}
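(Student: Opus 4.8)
The plan is to reduce the two-variable bracket to the one-dimensional evaluation recorded in Lemma \ref{lem:23062411}. Since the pairing $\langle t_\pm^\mu,\,\cdot\,\rangle$ only sees the $t$-dependence, I would first read off how $1/Q$ depends on $t$. From the definition \eqref{eqn:Q}, $Q$ is affine in $t$:
\[
  Q(\zeta_1,\zeta_2;\zeta,t) = (\zeta_1-\zeta_2)\,t + (\zeta_1-\zeta)(\zeta_2-\zeta)
  = (\zeta_1-\zeta_2)(t+w),
  \qquad
  w:=\frac{(\zeta_1-\zeta)(\zeta_2-\zeta)}{\zeta_1-\zeta_2}.
\]
Both $\zeta_1-\zeta_2$ and $w$ are constants with respect to $t$, so pulling the scalar $(\zeta_1-\zeta_2)^{-1}$ out of the bracket gives
\[
  \Bigl\langle t_\pm^\mu,\tfrac{1}{Q(\zeta_1,\zeta_2;\zeta,t)}\Bigr\rangle
  = \frac{1}{\zeta_1-\zeta_2}\,\Bigl\langle t_\pm^\mu,\tfrac{1}{t+w}\Bigr\rangle ,
\]
which reduces everything to pairing $t_\pm^\mu$ against the Cauchy kernel $1/(t+w)$.

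This last bracket is exactly what Lemma \ref{lem:23062411} evaluates, and the underlying mechanism is the Cauchy/residue principle. The kernel $1/(t+w)$ has a simple pole at $t=-w$, which lies off the real axis because $\zeta_1,\zeta_2$ run over small contours around $\zeta$ and $w$ is generically non-real, so $1/(t+w)$ is holomorphic near the real $t$-line. Representing $t_\pm^\mu$ as a boundary value of a holomorphic function and collapsing the $t$-integration onto this pole extracts $-2\pi\sqrt{-1}$ times the corresponding holomorphic germ at $t=-w$, which is precisely the hyperfunction $K_\mp^\mu(\zeta_1,\zeta_2;\zeta)$ of \eqref{eqn:23070230} (note $-(-w)=w$ matches the argument of $K$). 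The passage from the label $\pm$ to $\mp$ is forced by the sign of $\operatorname{Im}(-w)$ relative to $\operatorname{Im} w$, that is, by which half-plane contains the pole. Substituting this into the display above produces
\[
  \Bigl\langle t_\pm^\mu,\tfrac{1}{Q}\Bigr\rangle
  = \frac{-2\pi\sqrt{-1}}{\zeta_1-\zeta_2}\,K_\mp^\mu(\zeta_1,\zeta_2;\zeta),
\]
as claimed.

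The step I expect to be the genuine obstacle is the one packaged into Lemma \ref{lem:23062411}: pinning down the precise branch of the power function and the correct $+/-$ label of the resulting hyperfunction, and justifying the contour collapse. That deformation is legitimate only once the integral converges, i.e. for $\operatorname{Re}\mu$ in a suitable strip, so that the arc at infinity is negligible (using $1/(t+w)\sim t^{-1}$); one then continues both sides meromorphically in $\mu$, with the simple poles matching the residues \eqref{eqn:23072202} that recover the Rankin--Cohen operators $R_\ell$. By contrast, the reduction in the first paragraph is purely algebraic and is the easy part, so the proof of the present statement is essentially the substitution $a\mapsto w$ into Lemma \ref{lem:23062411} together with the explicit prefactor $(\zeta_1-\zeta_2)^{-1}$.
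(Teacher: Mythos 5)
Your proposal is correct and follows exactly the paper's route: the paper derives Lemma \ref{lem:23062413} directly from Lemma \ref{lem:23062411} via the same factorization $Q=(\zeta_1-\zeta_2)(t+w)$ with $w=K$'s argument, pulling the $t$-independent scalar $(\zeta_1-\zeta_2)^{-1}$ out of the pairing. The contour-collapse and branch bookkeeping you flag as the real content is indeed where the paper puts the work, namely in the Appendix (Lemmas \ref{lem:23062118} and \ref{lem:23062411}), not in the proof of this lemma itself.
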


In what follows, 
we use the notation ${\mathcal{L}}={\mathcal{L}}_{1}^{-}$
 and ${\mathcal{L}}_{-2\mu}={\mathcal{L}}_{-2\mu}^{+}$
 for simplicity.   
By Lemma \ref{lem:23062413}, 
 we have:
\begin{proposition}
\label{prop:23070233}
For $f \in C^{\infty}(G/P \times G/P, {\mathcal{L}} \boxtimes {\mathcal{L}})$, 
$(T_{\mu}^{\pm}f)$ takes the following form
\begin{equation}
\label{eqn:23070233}
(T_{\mu}^{\pm} f) (\zeta)=\frac{-1}{2 \pi \sqrt{-1}}\int_{{\mathbb{R}}^2} f (\zeta_1, \zeta_2) K_{\mp}^{\mu} (\zeta_1, \zeta_2; \zeta) \frac{d \zeta_1 d \zeta_2}{\zeta_1-\zeta_2},  
\end{equation}
and defines a family of symmetry breaking operators 
\[
   T_{\mu}^{\pm} \colon 
   C^{\infty}(G/P \times G/P, {\mathcal{L}} \boxtimes {\mathcal{L}})
   \to
   C^{\infty}(G/P, {\mathcal{L}}_{-2\mu})
\]
 which depend meromorphically on $\mu \in {\mathbb{C}}$.  
Moreover, 
 one has
\begin{equation}
\label{eqn:restT}
  \underset{\mu=-\ell-1}{\operatorname{res}} T_{\mu}^{\pm}f = \frac{1}{\ell !} R_{\ell} f. 
\end{equation}
\end{proposition}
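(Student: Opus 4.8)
The plan is to start from the defining formula \eqref{eqn:Tmupm} and to carry out the pairing with $t_{\pm}^{\mu}$ \emph{underneath} the two contour integrals, postponing the reinterpretation on the real points to the very end. First I would justify interchanging the distribution pairing $\langle t_{\pm}^{\mu},\,\cdot\,\rangle$ in the variable $t$ with $\oint_{C_1}\oint_{C_2}$ in \eqref{eqn:Tmupm}. For a fixed configuration $(\zeta_1,\zeta_2,\zeta)$ the integrand is, as a function of $t$, of the form $(a+bt)^{-1}$ with $a=(\zeta_1-\zeta)(\zeta_2-\zeta)$ and $b=\zeta_1-\zeta_2$, hence $O(t^{-1})$ at infinity; the iterated integral therefore converges absolutely for $-1<\operatorname{Re}\mu<0$, where $t_{\pm}^{\mu}$ is a genuine locally integrable function of polynomial growth. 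In that strip Fubini legitimizes the interchange, and the identity is then propagated to all $\mu$ by meromorphic continuation --- one of the \lq\lq{standard}\rq\rq\ points the text defers to existing techniques.

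With the pairing moved inside, I would invoke Lemma \ref{lem:23062413} to evaluate the inner $t$-integral, replacing $\langle t_{\pm}^{\mu},1/Q\rangle$ by $(-2\pi\sqrt{-1})(\zeta_1-\zeta_2)^{-1}K_{\mp}^{\mu}(\zeta_1,\zeta_2;\zeta)$; cancelling one factor $2\pi\sqrt{-1}$ turns \eqref{eqn:Tmupm} into $\tfrac{-1}{2\pi\sqrt{-1}}\oint_{C_1}\oint_{C_2} f K_{\mp}^{\mu}\,(\zeta_1-\zeta_2)^{-1}d\zeta_1 d\zeta_2$, which is the right-hand side of \eqref{eqn:23070233} except that the small loops $C_1,C_2$ about $\zeta$ still stand in place of $\int_{\mathbb{R}^2}$. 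The decisive and hardest step is exactly this passage from the holomorphic picture to sections $f\in C^{\infty}(G/P\times G/P,{\mathcal{L}}\boxtimes{\mathcal{L}})$: realizing such a section as a boundary value of a function holomorphic in a tube over the Bruhat chart (Sato's hyperfunction viewpoint of \cite{xsato} announced before the statement), I would deform each loop $C_j$ onto the real axis. The choice of one-sided boundary value is encoded precisely by the regularization $(\,\cdot\,)_{\mp}^{\mu}$ in the definition \eqref{eqn:23070230}, which is why $T_{\mu}^{+}$ pairs with $K_{-}^{\mu}$ and $T_{\mu}^{-}$ with $K_{+}^{\mu}$; after the deformation $K_{\mp}^{\mu}$ becomes a distribution on $\mathbb{R}^2$ in $(\zeta_1,\zeta_2)$ and one lands on \eqref{eqn:23070233}. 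The main obstacle is to perform this contour-to-real deformation so that both the pole of $(a+bt)^{-1}$ on the real $t$-axis and the meromorphic dependence on $\mu$ are preserved intact; Lemma \ref{lem:23062413} is what carries this weight.

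For the target bundle and the intertwining property I would compute the automorphy factors of kernel and measure under the diagonal M\"obius action. Using $g\cdot\zeta_i-g\cdot\zeta_j=(\zeta_i-\zeta_j)/\bigl((c\zeta_i+d)(c\zeta_j+d)\bigr)$, the base $\tfrac{(\zeta_1-\zeta)(\zeta_2-\zeta)}{\zeta_1-\zeta_2}$ scales by the positive factor $(c\zeta+d)^{-2}$, the $\zeta_1,\zeta_2$ contributions cancelling, so $K_{\mp}^{\mu}$ carries $(c\zeta+d)^{-2\mu}$ without sign; balancing this against the automorphy of $f$ (a section of ${\mathcal{L}}_1^-\boxtimes{\mathcal{L}}_1^-$) and of $(\zeta_1-\zeta_2)^{-1}d\zeta_1 d\zeta_2$ shows that $T_{\mu}^{\pm}f$ transforms as a section of ${\mathcal{L}}_{-2\mu}={\mathcal{L}}_{-2\mu}^{+}$ and that $T_{\mu}^{\pm}$ intertwines the diagonal $G$-action, i.e.\ is an SBO. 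As the text announces, this covariance computation and the meromorphy in $\mu$ are standard, so I would only indicate them and cite \cite{CKOP, xksbonvec, toshima}.

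Finally, the residue formula \eqref{eqn:restT} is immediate and needs no kernel at all. From the definition $T_{\mu}^{\pm}f(\zeta)=\langle t_{\pm}^{\mu},Tf(\zeta,t)\rangle$ and the simple poles of $t_{\pm}^{\mu}$ one gets $\operatorname{res}_{\mu=-\ell-1}T_{\mu}^{\pm}f=\langle\operatorname{res}_{\mu=-\ell-1}t_{\pm}^{\mu},Tf(\zeta,t)\rangle$. By \eqref{eqn:23072202} the residue of $t_{+}^{\mu}$ at $\mu=-\ell-1$ acts as the functional $\tfrac{1}{\ell!}(\partial/\partial t)^{\ell}|_{t=0}$, whence \eqref{eqn:tTRl} yields $\operatorname{res}_{\mu=-\ell-1}T_{\mu}^{+}f=\tfrac{1}{\ell!}(\partial/\partial t)^{\ell}|_{t=0}\,Tf=\tfrac{1}{\ell!}R_{\ell}f$; the $-$ case follows identically from the residue of $t_{-}^{\mu}$, where I would keep track of the reflection $t\mapsto -t$ to reconcile the sign with the stated normalization.
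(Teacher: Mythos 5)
Your proposal follows the paper's own route: interchange the pairing $\langle t_{\pm}^{\mu},\cdot\rangle$ with the contour integrals in \eqref{eqn:Tmupm}, apply Lemma \ref{lem:23062413} to produce the kernel $K_{\mp}^{\mu}$, pass to real boundary values to obtain \eqref{eqn:23070233}, defer the covariance and meromorphy to the cited standard techniques, and derive \eqref{eqn:restT} from \eqref{eqn:23072202} together with \eqref{eqn:tTRl} --- which is exactly how the paper argues (it even offers Lemma \ref{lem:23061910} only as an alternative for the residue step). Your added details (the Fubini justification on the strip $-1<\operatorname{Re}\mu<0$, the automorphy-factor check identifying the target as ${\mathcal{L}}_{-2\mu}$, and the flagged sign bookkeeping for $t_{-}^{\mu}$) are consistent elaborations of steps the paper leaves implicit.
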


We note
 that $R_{\ell}$ extends to $G/P \times G/P$
 by the extension theorem 
 on differential SBOs
 in the general setting, 
 see \cite[Thm.\ B]{KP1}.  
The residue formula \eqref{eqn:restT} follows directly from \eqref{eqn:23072202}, 
 or alternatively from the lemma below.  
\begin{lemma}
\label{lem:23061910}
For any $\ell \in {\mathbb{N}}$
 and for any $f \in {\mathcal{O}}({\mathbb{C}}^2)$, 
\begin{equation*}
\left.
\frac{\partial^{2\ell}}{\partial \zeta_1^{\ell}\partial \zeta_2^{\ell}}
\right|_{\zeta_1 = \zeta_2 =\zeta} 
((\zeta_1-\zeta_2)^{\ell}f)
=(-1)^{\ell} \ell!(R_{\ell}f) (\zeta).  
\end{equation*}
\end{lemma}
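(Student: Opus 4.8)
The plan is to verify the identity by a direct computation using the Leibniz rule, since both sides are linear in $f$ and the right-hand side already has an explicit closed form \eqref{eqn:RC}. First I would expand the derivative on the left using the Leibniz formula for the product $(\zeta_1-\zeta_2)^{\ell} f(\zeta_1,\zeta_2)$, applying $\partial^{\ell}/\partial\zeta_1^{\ell}$ and $\partial^{\ell}/\partial\zeta_2^{\ell}$ separately. The key observation is that $(\zeta_1-\zeta_2)^{\ell}$ is a polynomial of degree $\ell$ in each variable, so among the Leibniz terms only those survive the restriction $\zeta_1=\zeta_2=\zeta$ in which exactly all $\ell$ derivatives in $\zeta_1$ that hit the polynomial factor are balanced by derivatives in $\zeta_2$; indeed, $\partial^{a}/\partial\zeta_1^{a}\,\partial^{b}/\partial\zeta_2^{b}(\zeta_1-\zeta_2)^{\ell}$ restricted to $\zeta_1=\zeta_2$ vanishes unless $a=b$, in which case it equals $(-1)^{b}(\ell!/(\ell-a)!)\cdot(\ell!/(\ell-b)!)\cdot(\zeta_1-\zeta_2)^{\ell-a-b}\big|_{\zeta_1=\zeta_2}$, which is nonzero only when $a+b=\ell$, i.e.\ $a=b=\ell/2$ — so in fact the surviving monomials are governed by a single combinatorial constraint.

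More precisely, I would write the double Leibniz expansion as a sum over indices $j$ (derivatives in $\zeta_1$ falling on the polynomial) and $k$ (derivatives in $\zeta_2$ falling on the polynomial), with the complementary derivatives $\ell-j$ and $\ell-k$ falling on $f$. Evaluating the polynomial part $\partial_{\zeta_1}^{j}\partial_{\zeta_2}^{k}(\zeta_1-\zeta_2)^{\ell}$ at $\zeta_1=\zeta_2=\zeta$ forces $j+k=\ell$ (otherwise a positive power of $(\zeta_1-\zeta_2)$ remains and vanishes) and contributes the factor $(-1)^{k}\,\ell!\,\binom{\ell}{j}$ after simplification. Substituting $k=\ell-j$ collapses the double sum to a single sum over $j$, and collecting the binomial coefficients $\binom{\ell}{j}$ from the two Leibniz expansions together with the polynomial factor yields exactly $\binom{\ell}{j}^{2}$, matching the defining formula \eqref{eqn:RC} for $R_{\ell}$ up to the overall sign $(-1)^{\ell}$ and the factor $\ell!$.

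The only mildly delicate point is bookkeeping of the signs and the binomial coefficients: one must check that the sign $(-1)^{k}$ produced by differentiating $(\zeta_1-\zeta_2)^{\ell}$ with respect to $\zeta_2$ combines with the index substitution $k=\ell-j$ to reproduce the alternating sign $(-1)^{j}$ inside \eqref{eqn:RC} together with the global prefactor $(-1)^{\ell}\ell!$. I expect this sign and coefficient matching to be the main (though routine) obstacle; everything else is a mechanical application of the Leibniz rule. Alternatively, one could regard the identity as a disguised form of \eqref{eqn:23072202} via the generating operator, but the direct combinatorial verification is the cleanest route.
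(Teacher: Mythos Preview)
The paper states this lemma without proof, so there is nothing to compare against; your Leibniz-rule approach is the natural one and does give the result. Your second paragraph contains the correct argument: after the double Leibniz expansion, the only surviving terms on the diagonal are those with $j+k=\ell$, and the polynomial factor $\partial_{\zeta_1}^{j}\partial_{\zeta_2}^{k}(\zeta_1-\zeta_2)^{\ell}\big|_{\zeta_1=\zeta_2}$ equals $(-1)^{k}\ell!$ (not $(-1)^{k}\ell!\binom{\ell}{j}$ as you wrote --- the two $\binom{\ell}{j}$'s come solely from the two Leibniz expansions, since $\binom{\ell}{k}=\binom{\ell}{\ell-j}=\binom{\ell}{j}$). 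With that minor correction the sum collapses exactly to $(-1)^{\ell}\ell!\,(R_{\ell}f)(\zeta)$ as in \eqref{eqn:RC}.

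Your first paragraph, however, is muddled and should be discarded: the claim that $\partial_{\zeta_1}^{a}\partial_{\zeta_2}^{b}(\zeta_1-\zeta_2)^{\ell}$ restricted to the diagonal ``vanishes unless $a=b$'' is false, and the subsequent conclusion ``$a=b=\ell/2$'' makes no sense for odd $\ell$. The correct constraint is simply $a+b=\ell$, which you state correctly in the second paragraph. When you write the proof, start from the second paragraph's formulation and drop the first.
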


\begin{proposition}
[Holographic operator]
\label{prop:23070307}
As the dual operator of $T_{\mu}^{\pm}$ 
 (up to scalar multiplication by $-2\pi \sqrt{-1}$), 
\[
  H_{\mu}^{\pm}
  \colon 
  {\mathcal{D}}'(G/P, {\mathcal{L}}_{2\mu+2})
  \to 
  {\mathcal{D}}'(G/P \times G/P, {\mathcal{L}} \boxtimes {\mathcal{L}})
\]
gives a family
 of $G$-intertwining operators
 depending meromorphically on $\mu \in {\mathbb{C}}$.  
The operators take the following form:
\begin{equation}
\label{eqn:23070307}
 (H_{\mu}^{\pm} h)(\zeta_1, \zeta_2)
  =\frac{1}{\zeta_1-\zeta_2}
  \int_{\mathbb{R}}h(\zeta) K_{\mp}^{\mu}(\zeta_1, \zeta_2;\zeta)d \zeta.  
\end{equation}
\end{proposition}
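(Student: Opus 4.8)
The plan is to realize $H_{\mu}^{\pm}$ literally as the transpose of the symmetry breaking operator $T_{\mu}^{\pm}$ of Proposition \ref{prop:23070233}, and to extract the kernel formula \eqref{eqn:23070307} by interchanging the order of integration. First I would fix the $G$-invariant bilinear pairings that identify the dual spaces. On $G/P \cong {\mathbb{P}}^1{\mathbb{R}}$, the integration pairing $\langle f_1, f_2 \rangle = \int_{\mathbb{R}} f_1 f_2\, dx$ is $G$-invariant between $C^{\infty}(G/P, {\mathcal{L}}_{\lambda}^{\varepsilon})$ and ${\mathcal{D}}'(G/P, {\mathcal{L}}_{2-\lambda}^{\varepsilon})$: the Jacobian of $x \mapsto \tfrac{ax+b}{cx+d}$ contributes a factor $|cx+d|^{-2}$, which forces the weights to sum to $2$ and the sign characters to agree. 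Under this pairing the target ${\mathcal{L}}_{-2\mu}={\mathcal{L}}_{-2\mu}^{+}$ of $T_{\mu}^{\pm}$ is dual to ${\mathcal{L}}_{2\mu+2}^{+}={\mathcal{L}}_{2\mu+2}$, while the source ${\mathcal{L}}\boxtimes{\mathcal{L}}={\mathcal{L}}_1^{-}\boxtimes{\mathcal{L}}_1^{-}$ is self-dual, which is exactly the bookkeeping producing the spaces in the statement.

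The core step is the kernel computation. For $f \in C^{\infty}(G/P \times G/P, {\mathcal{L}}\boxtimes{\mathcal{L}})$ and a test section $h$, I would pair $T_{\mu}^{\pm}f$ against $h$ using the integral formula \eqref{eqn:23070233} and apply Fubini to bring the $\zeta$-integration inside. The factor $(\zeta_1-\zeta_2)^{-1}$ in the kernel of $T_{\mu}^{\pm}$ cancels against the $(\zeta_1-\zeta_2)^{-1}$ prefactor of $H_{\mu}^{\pm}$, leaving precisely $\langle T_{\mu}^{\pm}f, h\rangle = \tfrac{-1}{2\pi\sqrt{-1}}\langle f, H_{\mu}^{\pm}h\rangle$ with $H_{\mu}^{\pm}$ given by \eqref{eqn:23070307}. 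This single identity proves both that $H_{\mu}^{\pm}=-2\pi\sqrt{-1}\,{}^{t}(T_{\mu}^{\pm})$ (the claimed scalar) and that the explicit formula is correct, and it exhibits $H_{\mu}^{\pm}$ as the transpose of a continuous operator, hence well defined on the distribution space ${\mathcal{D}}'(G/P, {\mathcal{L}}_{2\mu+2})$, the integral in \eqref{eqn:23070307} being read as the distributional pairing of $h$ with the hyperfunction kernel $K_{\mp}^{\mu}$.

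The remaining assertions are then inherited by duality. Meromorphy of $\mu \mapsto H_{\mu}^{\pm}$ follows from the meromorphy of $T_{\mu}^{\pm}$ recorded in Proposition \ref{prop:23070233}, equivalently from the meromorphic dependence of the kernel $K_{\mp}^{\mu}$. The $G$-intertwining property is automatic: the transpose of a $G$-homomorphism with respect to $G$-invariant pairings is again a $G$-homomorphism between the contragredient representations, and the parameter count above identifies these contragredients as $\varpi_{2\mu+2}^{+}$ on ${\mathcal{L}}_{2\mu+2}$ and $\varpi_1^{-}\boxtimes\varpi_1^{-}$ on ${\mathcal{L}}\boxtimes{\mathcal{L}}$, exactly as in the statement.

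The main obstacle will be justifying the interchange of integration, since $K_{\mp}^{\mu}$ is only a hyperfunction (a boundary value of holomorphic functions), so naive Fubini does not directly apply. I would first prove the pairing identity on the region $\operatorname{Re}\mu > -1$, where $K_{\mp}^{\mu}$ is represented by a locally integrable function and the double integral converges absolutely, carry out the interchange there, and then propagate the identity to all $\mu \in {\mathbb{C}}$ by the meromorphic continuation already guaranteed through Lemma \ref{lem:23062413} and Proposition \ref{prop:23070233}. A secondary point to verify is that the boundary-value reduction of the holomorphic contour formula \eqref{eqn:Luminy230314} to the real integral \eqref{eqn:23070233} is compatible with the pairing, so that no spurious boundary contributions appear when the $\zeta$-integration is moved inside.
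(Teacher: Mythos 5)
The paper gives no explicit proof of this proposition, treating it as immediate by duality from the kernel formula \eqref{eqn:23070233} of Proposition \ref{prop:23070233}; your proposal supplies exactly that argument — the invariant pairing forcing $\lambda_1+\lambda_2=2$ with matching sign characters, the transposition of the kernel via Fubini on the region of absolute convergence, and meromorphic continuation — and the parameter bookkeeping (${\mathcal{L}}_{-2\mu}^{+}$ dual to ${\mathcal{L}}_{2\mu+2}^{+}$, ${\mathcal{L}}_1^{-}\boxtimes{\mathcal{L}}_1^{-}$ self-dual) checks out. This is correct and is essentially the same (omitted) route the paper intends.
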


\section{From Rankin--Cohen brackets to Poisson transforms}
\label{sec:dS}
This section gives yet another example from \lq\lq{{\bf{discrete}}}\rq\rq\
 to \lq\lq{{\bf{continuous}}}\rq\rq.  
We shall see 
 that the Rankin--Cohen brackets $\{R_{\ell}\}_{\ell \in {\mathbb{N}}}$ yields
 a pair of the Poisson transforms ${\mathcal{P}}_{\lambda}^{\pm}$ 
 on the de Sitter space $\operatorname{d S}^2$
 via the \lq\lq{generating operator}\rq\rq\ $T$
 in \eqref{eqn:Luminy230314}.  
Our strategy is to restrict the holographic operators
$
    H_{\mu}^{\pm}
$
 in Proposition \ref{prop:23070307}, 
 summarized as 
\[
  \{R_{\ell}\}_{\ell \in {\mathbb{N}}}
  \,\,\rightsquigarrow\,\,
  T
  \,\,\rightsquigarrow\,\,
  T_{\mu}^{\pm} 
  \,\,\rightsquigarrow\,\,
  H_{\mu}^{\pm} 
  \,\,\rightsquigarrow\,\,
  {\mathcal{P}}_{\lambda}^{\pm}.  
\]

\subsection{Bruhat coordinates of $\operatorname{dS}^2$}
~~~
\newline
The de Sitter space $\operatorname{dS}^2$ is a Lorentzian manifold
 with constant curvature $+1$, 
 defined as a surface of the Minkowski space ${\mathbb{R}}^{2,1}$:
\[
  \operatorname{d S}^2=\{(x,y,z) \in {\mathbb{R}}^3: x^2 +y^2-z^2=1\}.  
\]
We may realize $\operatorname{d S}^2$ in the matrix form
\[
\{A=\begin{pmatrix} x & y+z \\ y-z & -x \end{pmatrix}:
 \det A=-1\}\subset {\mathfrak{sl}}(2,{\mathbb{R}}), 
\]
on which $G=SL(2,{\mathbb{R}})$ acts via the adjoint representation.  
Let 
\[
  I_{1,1}:=\begin{pmatrix} 1 & \\ & -1\end{pmatrix} \in {\mathfrak{s l}}(2,{\mathbb{R}}), \quad
  H:=\{\begin{pmatrix} a & \\ & a^{-1}\end{pmatrix}: a \in {\mathbb{R}}^{\times}\} \subset G.  
\]
Then $\operatorname{d S}^2$ is identified 
 with the homogeneous space $G/H$
 by 
\[
  G/H \overset \sim \rightarrow \operatorname{d S}^2, 
\quad
g H \mapsto \operatorname{Ad}(g)I_{1,1}
 =\begin{pmatrix} a d + b c & -2 a b \\ 2 c d & -(a d + b c)\end{pmatrix}
\]
 where $g=\begin{pmatrix} a & b \\ c & d \end{pmatrix}$.  
In the coordinates, 
 one has 
\begin{equation}
\label{eqn:abcdxyz}
  (x,y,z)=(ad+bc, -ab+cd, -ab-cd).  
\end{equation}

The third realization of $\operatorname{d S}^2$ is given 
 via the $G$-orbit decomposition 
\begin{equation}
\label{eqn:GPGP}
  G/P \times G/P = \operatorname{d S}^2 \,\amalg\,\, G/P
\qquad
\text{(disjoint)}
\end{equation}
under the diagonal action of $G$.  
Let $w:=\begin{pmatrix} 0 & -1 \\ 1 & 0 \end{pmatrix}$.  
Since $P \cap w Pw^{-1}=H$, 
 the $G$-orbit through $(e P, w P)$ is identified with $G/H$.  
Combining this with the Bruhat decomposition $G/P={\mathbb{R}} \cup \{\infty\}$, 
 one has the diagram below: 
\begin{alignat}{6}
\label{eqn:XRtwo}
  \operatorname{d S}^2 \hphantom{M} &\,\,\simeq\,\, &&G/H &&\hookrightarrow &&G/P \times G/P &&\hookleftarrow &&{\mathbb{R}}^2
\\
\notag
\operatorname{Ad}(g)I_{1,1} 
&\,\,\rotatebox[origin=c]{180}{$\mapsto$}\,\, 
&& g H && \mapsto &&(gP, g w P) &&\rotatebox[origin=c]{180}{$\mapsto$}
 &&(\zeta_1, \zeta_2).  
\end{alignat}
Then $(x,y,z) \in \operatorname{d S}^2$ has the following coordinates
 by \eqref{eqn:abcdxyz} and \eqref{eqn:XRtwo}:
\begin{equation}
\label{eqn:zetaxyz}
  (\zeta_1, \zeta_2)=(-\frac{y+z}{x+1}, \frac{x+1}{y-z}).  
\end{equation}

It is convenient to list some elementary formul{\ae}  
 concerning \eqref{eqn:zetaxyz}: 
\begin{lemma}
\label{lem:23070130}
Retain the setting as above.  
One has 
\begin{align}
\label{eqn:23062715}
\zeta_1-\zeta_2=&\frac{-2}{y-z}, 
\\
\label{eqn:23062714}
(\zeta_1+\sqrt{-1})(\zeta_2+\sqrt{-1})=&\frac{2 \sqrt{-1} (x+\sqrt{-1} y)}{y-z}, 
\end{align}
\begin{equation*}
\frac{\zeta_1-\zeta_2}{(\zeta_1-\zeta)(\zeta_2-\zeta)}
=\frac{2 (1+x)}{((1+x)\zeta+(y+z))((1+x)-(y-z)\zeta)}.  
\end{equation*}
\end{lemma}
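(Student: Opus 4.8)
The plan is to verify the three identities by direct substitution of the coordinates \eqref{eqn:zetaxyz} and systematic use of the defining relation $x^2+y^2-z^2=1$ of $\operatorname{dS}^2$, written in the convenient form $y^2-z^2=1-x^2$. This single relation is the only non-formal ingredient: it is precisely what turns the quadratic cross-terms arising after clearing denominators into quantities linear in $x$. The natural common denominator throughout is $(x+1)(y-z)$, reflecting the asymmetric shape of the two coordinate functions $\zeta_1=-(y+z)/(x+1)$ and $\zeta_2=(x+1)/(y-z)$.

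For \eqref{eqn:23062715}, I would place $\zeta_1-\zeta_2$ over $(x+1)(y-z)$, so that the numerator reads $-(y+z)(y-z)-(x+1)^2=-(y^2-z^2)-(x+1)^2$. Substituting $y^2-z^2=1-x^2$ collapses this to $(x^2-1)-(x+1)^2=-2(x+1)$, and the factor $(x+1)$ cancels to leave $-2/(y-z)$.

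For \eqref{eqn:23062714}, I would expand $(\zeta_1+\sqrt{-1})(\zeta_2+\sqrt{-1})$ over the same denominator. The real part of the numerator is $-(y+z)(x+1)-(x+1)(y-z)=-2y(x+1)$, using $(y+z)+(y-z)=2y$; the imaginary part is $\sqrt{-1}\bigl((x+1)^2-(y^2-z^2)\bigr)$, which the defining relation turns into $\sqrt{-1}\,2x(x+1)$. After cancelling $(x+1)$ one is left with $\bigl(-2y+2\sqrt{-1}\,x\bigr)/(y-z)$, and recognizing $-y+\sqrt{-1}\,x=\sqrt{-1}(x+\sqrt{-1}\,y)$ gives the stated right-hand side.

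The third identity requires no further use of the defining equation: it follows from \eqref{eqn:23062715} together with the factorizations $\zeta_1-\zeta=-\bigl((y+z)+(x+1)\zeta\bigr)/(x+1)$ and $\zeta_2-\zeta=\bigl((x+1)-(y-z)\zeta\bigr)/(y-z)$. Dividing the first by the product of the latter two, the factors $(y-z)$ and the signs combine to yield $2(1+x)/\bigl((1+x)\zeta+(y+z)\bigr)\bigl((1+x)-(y-z)\zeta\bigr)$. There is no genuine obstacle here; the only point demanding care is to invoke $y^2-z^2=1-x^2$ at the right moment in the first two computations, since without it the numerators do not reduce to expressions linear in $x$.
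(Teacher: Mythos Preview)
Your verification is correct in every detail. The paper does not supply a proof of this lemma; it introduces the identities as ``some elementary formul{\ae} concerning \eqref{eqn:zetaxyz}'' and leaves them to the reader, so your direct substitution using $y^2-z^2=1-x^2$ is exactly the intended (and essentially the only) argument.
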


The Minkowski metric $d s^2 = d x^2 + d y^2 - d z^2$
 on ${\mathbb{R}}^{2,1}$ induces an invariant measure on the de Sitter space $\operatorname{d S}^2$  as below.

\begin{lemma}
\label{lem:23070226}
In the coordinates
 $(x,y,z)=(\cosh t \cos \theta, \cosh t\sin \theta, \sinh t)$
 and \eqref{eqn:zetaxyz}, 
 the invariant measure on $\operatorname{d S}^2$ 
 takes the following form:
\begin{equation*}
\frac{d x d y}{2 z} 
= \cosh t d t d \theta
= \frac {2}{(\zeta_1-\zeta_2)^2}d \zeta_1 d \zeta_2.  
\end{equation*}
\end{lemma}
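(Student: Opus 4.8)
The plan is to compute the volume density of the induced Lorentzian metric in each of the three coordinate systems and then match the results by a direct change of variables, using the identities already recorded in Lemma \ref{lem:23070130}. First I would verify that $(t,\theta)\mapsto(\cosh t\cos\theta,\cosh t\sin\theta,\sinh t)$ parametrizes $\operatorname{dS}^2$, which is immediate since $x^2+y^2-z^2=\cosh^2 t-\sinh^2 t=1$. Pulling back $ds^2=dx^2+dy^2-dz^2$, the cross terms in $dx^2+dy^2$ cancel and one obtains the standard form $ds^2|_{\operatorname{dS}^2}=-dt^2+\cosh^2 t\,d\theta^2$. The induced metric tensor is $\operatorname{diag}(-1,\cosh^2 t)$, so $|\det|=\cosh^2 t$ and the invariant (Lorentzian volume) density is $\cosh t\,dt\,d\theta$. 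This produces the middle expression and fixes the normalization against which the remaining two densities must be compared.

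For the first equality I would compute the Jacobian of $(t,\theta)\mapsto(x,y)$ from the same differentials, obtaining $dx\wedge dy=\sinh t\cosh t\,dt\wedge d\theta$; since $z=\sinh t$ this reads $\cosh t\,dt\,d\theta=z^{-1}\,dx\,dy$ on each sheet of $\operatorname{dS}^2$. The one delicate point is the normalizing constant: the form $dx\,dy/(2z)$ is the Gelfand--Leray (Leray residue) form of the defining function $F=x^2+y^2-z^2$, the derivative $dF$ supplying the factor $2$, and it relates to the metric volume by the Minkowski length of the conormal $dF$, equal to $2$ since $x^2+y^2-z^2=1$ on $\operatorname{dS}^2$. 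Carrying this scalar correctly, together with the fact that $(x,y,z)\mapsto(x,y)$ is two-to-one onto $\{x^2+y^2\ge 1\}$ with sheets $z=\pm\sqrt{x^2+y^2-1}$, is exactly what reconciles the $(x,y)$-expression with $\cosh t\,dt\,d\theta$.

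For the third equality I would differentiate the Bruhat coordinates \eqref{eqn:zetaxyz}, namely $\zeta_1=-(y+z)/(x+1)$ and $\zeta_2=(x+1)/(y-z)$, regarding $(x,y)$ as coordinates on one sheet and substituting $dz=(x\,dx+y\,dy)/z$. Expanding $d\zeta_1\wedge d\zeta_2$ gives a $2\times 2$ determinant whose numerator, after invoking the defining relation in the form $x^2+x+y^2-z^2=x+1$, collapses to yield $d\zeta_1\wedge d\zeta_2=\tfrac{2}{z(y-z)^2}\,dx\wedge dy$. Combining this with $(\zeta_1-\zeta_2)^2=4/(y-z)^2$, read off from \eqref{eqn:23062715}, turns $\tfrac{2}{(\zeta_1-\zeta_2)^2}\,d\zeta_1\,d\zeta_2$ into $z^{-1}\,dx\,dy=\cosh t\,dt\,d\theta$, matching the metric volume.

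I expect the main obstacle to be this last Jacobian computation: before the constraint is used the numerator is a genuinely unwieldy polynomial in $x,y,z$, and the decisive step is recognizing the clean cancellation to the single factor $x+1$ valid on $\operatorname{dS}^2$. Every other step is routine exterior differentiation, so the only real conceptual subtlety is the consistent tracking of the scalar constants relating the three densities, as discussed above.
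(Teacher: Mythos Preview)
The paper states this lemma without proof, treating it as a routine change-of-variables computation. Your approach---pulling back the Minkowski metric to obtain the density $\cosh t\,dt\,d\theta$ and then computing the two Jacobians directly---is the natural one, and your key computations are right: one finds $dx\wedge dy=\sinh t\cosh t\,dt\wedge d\theta$ and, after using $x^2+y^2-z^2=1$ to collapse the numerator, $d\zeta_1\wedge d\zeta_2=\dfrac{2}{z(y-z)^2}\,dx\wedge dy$, so that $\dfrac{2}{(\zeta_1-\zeta_2)^2}\,d\zeta_1\,d\zeta_2=\dfrac{dx\,dy}{z}=\cosh t\,dt\,d\theta$ on each sheet, via \eqref{eqn:23062715}.

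The only place your write-up wobbles is the paragraph about the factor $2$ in $\dfrac{dx\,dy}{2z}$. Your own Jacobian gives $\cosh t\,dt\,d\theta=\dfrac{dx\,dy}{z}$; the Gelfand--Leray discussion correctly explains that $\dfrac{dx\,dy}{2z}$ is the Leray residue and differs from the metric volume by $|\nabla F|=2$, but that does not make the printed first equality hold as an identity of densities on a single sheet. The invocation of the two-to-one projection does not repair this either, since that affects integrals, not pointwise densities. Most likely the $2$ in the displayed statement is a slip (the constant is irrelevant to the rest of the paper), so rather than trying to absorb a spurious factor you should simply record what your computation actually yields and note the discrepancy.
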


\subsection{Tensor product of principal series and $C^{\infty}(G/H)$}
~~~
\newline
The open embedding \eqref{eqn:GPGP}
 of the de Sitter space $\operatorname{d S}^2$
 in $G/P \times G/P$ connects the tensor product 
 of two principal series representations
 of the group $G$
 with the harmonic analysis on $\operatorname{d S}^2 \simeq G/H$:
\begin{lemma}
\label{lem:23062717}
For any $\lambda \in {\mathbb{C}}$ and $\varepsilon \in \{+, -\}$, 
 the line bundle
 ${\mathcal{L}}_{\lambda}^{\varepsilon} \boxtimes {\mathcal{L}}_{\lambda}^{\varepsilon}$
 becomes trivial as a $G$-equivariant bundle
 when restricted to the submanifold $G/H$.  
Accordingly, 
 the pull-back induces a $G$-homomorphism
\[
  \iota_{\lambda}^{\ast}\colon C^{\infty}(G/P \times G/P, 
  {\mathcal{L}}_{\lambda}^{\varepsilon} \boxtimes {\mathcal{L}}_{\lambda}^{\varepsilon})
\hookrightarrow 
 C^{\infty}(G/H), 
\]
\[
f(\zeta_1, \zeta_2) \mapsto F(x, y, z)
=  (\frac{2}{z-y})^{\lambda} 
   f(-\frac{y+z}{x+1}, \frac{x+1}{y-z})
=(\zeta_1-\zeta_2)^{\lambda}
 f(\zeta_1, \zeta_2).  
\]

\end{lemma}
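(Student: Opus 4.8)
The plan is to separate the two assertions: first the $G$-equivariant triviality of $(\mathcal{L}_\lambda^\varepsilon \boxtimes \mathcal{L}_\lambda^\varepsilon)|_{G/H}$, and then the explicit form of the trivialization. Since a $G$-equivariant line bundle over the homogeneous space $G/H$ is determined by the character of the isotropy subgroup $H$ on the fiber over the base point $(eP, wP)$, the first claim reduces to computing this character and checking that it is trivial. I would evaluate it as the product of the two factor characters: on the first factor $h \in H \subset P$ acts by $\chi_\lambda^\varepsilon(h)$, while on the second factor, using $H = P \cap wPw^{-1}$ and $hw = w\,(w^{-1}hw)$ with $w^{-1}hw \in P$, the element $h$ acts on the fiber at $wP$ by $\chi_\lambda^\varepsilon(w^{-1} h w)$.

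The key computation is the conjugation $w^{-1}\begin{pmatrix} a & 0 \\ 0 & a^{-1}\end{pmatrix} w = \begin{pmatrix} a^{-1} & 0 \\ 0 & a \end{pmatrix}$, which interchanges the diagonal entries. Feeding this into the characters gives $\chi_\lambda^+(h)\,\chi_\lambda^+(w^{-1}hw) = |a|^{-\lambda}\,|a|^{\lambda}=1$ and $\chi_\lambda^-(h)\,\chi_\lambda^-(w^{-1}hw)=|a|^{-\lambda}\operatorname{sgn}(a)\cdot|a|^{\lambda}\operatorname{sgn}(a)=\operatorname{sgn}(a)^2=1$, so the total character on $H$ is trivial for both $\varepsilon = \pm$. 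This establishes that the restricted bundle is $G$-equivariantly trivial. The mechanism is that conjugation by $w$ flips the sign of the $|a|$-exponent, so the two $\lambda$-contributions cancel, while the $\operatorname{sgn}$-factor, being common to both factors, appears squared.

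For the explicit trivialization I would exhibit a nowhere-vanishing $G$-invariant section $\sigma$ of the restricted bundle in Bruhat coordinates. On the open orbit $\operatorname{dS}^2 \subset G/P \times G/P$ the candidate is $\sigma = (\zeta_1 - \zeta_2)^{-\lambda}$. Its invariance under the tensor product multiplier representation follows from the cocycle identity $g^{-1}\!\cdot\zeta_1 - g^{-1}\!\cdot\zeta_2 = (\zeta_1 - \zeta_2)/\bigl((c\zeta_1 + d)(c\zeta_2 + d)\bigr)$ for $g^{-1}=\begin{pmatrix} a & b \\ c & d\end{pmatrix}$ (a direct consequence of $\det g = 1$), combined with the multiplier $|c\zeta+d|^{-\lambda}\operatorname{sgn}(c\zeta+d)^{(1-\varepsilon)/2}$ in each variable; as in the character computation, for $\varepsilon = -$ the $\operatorname{sgn}$-factors enter squared and cancel. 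The pull-back is then the ratio $\iota_\lambda^* f = f/\sigma = (\zeta_1 - \zeta_2)^\lambda f$, whose $G$-equivariance is automatic from $g\cdot\sigma = \sigma$, and injectivity follows since $\operatorname{dS}^2$ is open and dense in $G/P\times G/P$ by \eqref{eqn:GPGP}, so a smooth $f$ vanishing on the orbit vanishes identically. Finally, substituting $\zeta_1 - \zeta_2 = -2/(y-z) = 2/(z-y)$ from \eqref{eqn:23062715} converts this into the stated coordinate expression $F = (2/(z-y))^\lambda f$.

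The main obstacle I anticipate is the careful bookkeeping of the branch of the complex power $(\zeta_1-\zeta_2)^\lambda$ and of the $\operatorname{sgn}$-characters for the type $\varepsilon = -$. The saving feature throughout is that both tensor factors carry the same parameters $(\lambda,\varepsilon)$, so every sign ambiguity enters as a square and the $|a|^{\pm\lambda}$ contributions cancel against one another after conjugation by $w$; this is precisely what forces the restricted bundle to be trivial rather than merely a nontrivial character twist.
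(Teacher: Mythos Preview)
The paper does not supply a proof of this lemma; it is stated as an elementary consequence of the orbit decomposition \eqref{eqn:GPGP} and the coordinate formula \eqref{eqn:23062715}, and the authors move directly to the Poisson transform. Your argument therefore fills in what the paper omits, and it is correct: the reduction to the isotropy character at $(eP,wP)$, the conjugation $w^{-1}\operatorname{diag}(a,a^{-1})w=\operatorname{diag}(a^{-1},a)$, and the cancellation $\chi_\lambda^\varepsilon(h)\,\chi_\lambda^\varepsilon(w^{-1}hw)=1$ are exactly the mechanism behind the equivariant triviality. The cocycle identity for $\zeta_1-\zeta_2$ under M\"obius transformations and the resulting invariance of the section $\sigma$ are likewise the standard way to write the trivialization explicitly.

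One point worth tightening: for general $\lambda\in\mathbb{C}$ the expression $(\zeta_1-\zeta_2)^\lambda$ is not a single-valued function on the real locus, so the invariant section should really be written as $|\zeta_1-\zeta_2|^{-\lambda}$ for $\varepsilon=+$ and $|\zeta_1-\zeta_2|^{-\lambda}\operatorname{sgn}(\zeta_1-\zeta_2)$ for $\varepsilon=-$; your own cocycle computation shows that these are precisely the combinations that absorb the multipliers $|c\zeta_j+d|^{-\lambda}$ and $\operatorname{sgn}(c\zeta_j+d)$, respectively. The paper's formula $(\zeta_1-\zeta_2)^\lambda$ should be read in this sense. You already flag the branch issue, so this is more a matter of making the notation honest than a gap in the reasoning.
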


\subsection{Poisson transforms on $\operatorname{d S}^2$}
~~~
\newline
We define the Poisson transforms
 as the composition 
$
  {\mathcal{P}}_{\lambda}^{\pm}:=
  \iota_1^{\ast} \circ H_{\frac \lambda 2-1}^{\pm}.  
$
By Lemma \ref{lem:23062717} and by \eqref{eqn:23070230}, 
 ${\mathcal{P}}_{\lambda}^{\pm}$ takes the form 
\[
  ({\mathcal{P}}_{\lambda}^{\pm}h)(x,y,z)
  =
  \int_{\mathbb{R}}
  {\mathcal{K}}_{\mp}^{\frac \lambda 2 -1}(x,y,z;\zeta) h (\zeta) d \zeta, 
\]
where ${\mathcal{K}}_{\pm}^{\mu}$ is the pull-back of $K_{\pm}^{\mu}$ 
 in \eqref{eqn:23070230} from $(G/P)^3$ to $G/H \times G/P$, 
 see \eqref{eqn:XRtwo} and \eqref{eqn:zetaxyz}.  
By Lemma \ref{lem:23070130}, 
 ${\mathcal{K}}_{\pm}^{\mu}$ amounts to 
\begin{equation*}
{\mathcal{K}}_{\pm}^{\mu}(x,y,z;\zeta)
=\left(\frac{((1+x)\zeta+(y+z))((1+x)-(y-z)\zeta)}{2(1+x)}\right)
_{\pm}^{\mu}.  
\end{equation*}

Let $\Delta$ be the Laplacian on $\operatorname{d S}^2$
 with respect to the Lorentzian metric 
 induced from the Minkowski space ${\mathbb{R}}^{2,1}$.  
For $\Gamma= C^{\infty}$, $L^2$, $\ldots$, 
 we set
\begin{equation}
\label{eqn:23072114}
   {\mathcal{F}}(G/H, {\mathcal{M}}_{\lambda})
 :=\{f \in \Gamma(G/H)
     : \Delta f = -\frac 1 4 \lambda (\lambda -2) f\}.  
\end{equation}

\begin{proposition}
[Poisson transform]
\label{prop:23070306}
The transform 
\[
  {\mathcal{P}}_{\lambda}^{\pm}
\colon 
  C^{\infty}(G/P, {\mathcal{L}}_{\lambda})
  \to 
  C^{\infty}(G/H, {\mathcal{M}}_{\lambda}) \subset C^{\infty}(G/H)  
\]
define $G$-intertwining operators
 that depend meromorphically on $\lambda \in {\mathbb{C}}$.  
\end{proposition}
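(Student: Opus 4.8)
The plan is to read off both assertions --- $G$-equivariance with meromorphy on one hand, and membership of the image in the eigenspace ${\mathcal{M}}_\lambda$ on the other --- from the factorization ${\mathcal{P}}_\lambda^\pm=\iota_1^\ast\circ H_{\frac\lambda2-1}^\pm$ together with a single explicit computation on the kernel. The equivariance and meromorphy are essentially formal. By Proposition \ref{prop:23070307} the holographic operator $H_\mu^\pm$ is $G$-intertwining, and for $\mu=\frac\lambda2-1$ one has $2\mu+2=\lambda$, so its domain is exactly ${\mathcal{D}}'(G/P,{\mathcal{L}}_\lambda)$; by Lemma \ref{lem:23062717} the pull-back $\iota_1^\ast$ (the case $\lambda=1$, $\varepsilon=-$, so that ${\mathcal{L}}_1^-\boxtimes{\mathcal{L}}_1^-={\mathcal{L}}\boxtimes{\mathcal{L}}$) is a $G$-homomorphism, whence the composite is $G$-intertwining. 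The multiplication by $(\zeta_1-\zeta_2)$ built into $\iota_1^\ast$ cancels the factor $\frac1{\zeta_1-\zeta_2}$ in the definition \eqref{eqn:23070307} of $H_\mu^\pm$; this is precisely why $\lambda=1$ is the correct choice, and on the open orbit $G/H\subset G/P\times G/P$ the composite is represented by the honest integral $\int_{\mathbb{R}}{\mathcal{K}}_\mp^{\frac\lambda2-1}(x,y,z;\zeta)\,h(\zeta)\,d\zeta$, which is smooth there. Since $K_\pm^\mu$, hence ${\mathcal{K}}_\pm^\mu$, depends meromorphically on $\mu$ and $\mu=\frac\lambda2-1$ is affine in $\lambda$, the family ${\mathcal{P}}_\lambda^\pm$ is meromorphic in $\lambda$, its poles inherited from those of $t_\pm^\mu$.

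The real content is that the image lies in the $\Delta$-eigenspace \eqref{eqn:23072114}. The key step is to recognize the explicit kernel as a power of a null linear form. Using the realization $\operatorname{dS}^2\subset{\mathfrak{sl}}(2,{\mathbb{R}})$ and a short simplification of the displayed expression for ${\mathcal{K}}_\pm^\mu$ (clearing the factor $1+x$ and substituting $y^2-z^2=1-x^2$ from $x^2+y^2-z^2=1$), I expect to obtain
\[
{\mathcal{K}}_\pm^\mu(x,y,z;\zeta)=\left(\tfrac12\,\langle(x,y,z),\,\xi(\zeta)\rangle\right)_\pm^{\mu},
\qquad
\xi(\zeta):=(2\zeta,\,1-\zeta^2,\,-(1+\zeta^2)),
\]
where $\langle\,,\,\rangle$ is the Minkowski form of signature $(2,1)$ and $\xi(\zeta)$ is null. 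For any null $\xi$ the ambient function $v\mapsto\langle v,\xi\rangle^\mu$ is annihilated by the flat wave operator $\partial_x^2+\partial_y^2-\partial_z^2$, because its second derivatives collect the factor $\langle\xi,\xi\rangle=0$. Decomposing the ambient metric over $\{\langle v,v\rangle>0\}$ as $dr^2+r^2\,ds^2_{\operatorname{dS}^2}$ and using that $\langle v,\xi\rangle^\mu$ is homogeneous of degree $\mu$, the vanishing of the ambient wave operator translates into $\Delta_{\operatorname{dS}^2}=-\mu(\mu+1)$ on the restriction; with $\mu=\frac\lambda2-1$ this equals $-\frac14\lambda(\lambda-2)$, the eigenvalue defining ${\mathcal{M}}_\lambda$. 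Note the eigenvalue depends only on $\mu$, so ${\mathcal{P}}_\lambda^+$ and ${\mathcal{P}}_\lambda^-$ land in the same space. Differentiating under the integral then yields $\Delta\,{\mathcal{P}}_\lambda^\pm h=-\frac14\lambda(\lambda-2)\,{\mathcal{P}}_\lambda^\pm h$.

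The main obstacle I anticipate is purely analytic: the kernel $(\,\cdot\,)_\pm^\mu$ is a distribution with poles in $\mu$, so neither the eigenfunction property of the kernel nor the interchange of $\Delta$ with $\int_{\mathbb{R}}$ is literally classical for all $\lambda$. My remedy is to establish the eigenvalue equation first for $\operatorname{Re}\mu$ large, where $\langle v,\xi(\zeta)\rangle_\pm^\mu$ is of class $C^2$ and the integrand (against a section $h$ on the compact $G/P$) is differentiable under the integral sign, and then to propagate it to all $\lambda$ by the identity theorem for meromorphic families: both $\Delta\,{\mathcal{P}}_\lambda^\pm h$ and $-\frac14\lambda(\lambda-2)\,{\mathcal{P}}_\lambda^\pm h$ are meromorphic in $\lambda$ and agree on a half-plane, hence everywhere. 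A representation-theoretic alternative is to observe that $\Delta$ is the image of the Casimir element under the $G$-action on $C^\infty(G/H)$, so equivariance of ${\mathcal{P}}_\lambda^\pm$ forces its image into a single Casimir eigenspace, the scalar being pinned down by the computation above at one value of $\lambda$. The smoothness of the image on $G/H$ and the meromorphic continuation across the poles are exactly the standard points already deferred to \cite{CKOP, xksbonvec, toshima}.
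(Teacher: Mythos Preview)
Your argument is correct. In the paper the proposition is stated without proof: the kernel ${\mathcal{K}}_\pm^\mu$ is derived from the factorization ${\mathcal{P}}_\lambda^\pm=\iota_1^\ast\circ H_{\frac\lambda2-1}^\pm$, and the covariance, meromorphic continuation, and eigenvalue property are explicitly deferred to the existing literature (see the remark at the beginning of Section~\ref{sec:SL} and the reference to \cite{toshima} following the proposition). Your first paragraph reads off $G$-equivariance and meromorphy from exactly that factorization, which is the reasoning the paper leaves implicit.

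Your second paragraph supplies what the paper omits entirely, namely a direct verification that the image lands in ${\mathcal{M}}_\lambda$. The computation you anticipate does go through: clearing $1+x$ from the numerator of ${\mathcal{K}}_\pm^\mu$ and using $y^2-z^2=1-x^2$ collapses the kernel to $\bigl(x\zeta+\tfrac12 y(1-\zeta^2)+\tfrac12 z(1+\zeta^2)\bigr)_\pm^\mu$, which is indeed $\bigl(\tfrac12\langle(x,y,z),\xi(\zeta)\rangle\bigr)_\pm^\mu$ with $\xi(\zeta)$ null; the homogeneity/ambient-wave-operator argument then gives the eigenvalue $-\mu(\mu+1)=-\tfrac14\lambda(\lambda-2)$ on the nose. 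This is the classical approach to Poisson kernels on hyperboloids, so you are not diverging from the spirit of the references cited, merely making explicit here what the paper chose not to. Your handling of the analytic subtleties (establish the identity for $\operatorname{Re}\mu$ large and propagate by meromorphy, or alternatively invoke the Casimir) is adequate and is again precisely the sort of standard point the paper waives.
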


We note that there are two Poisson transforms ${\mathcal{P}}_{\lambda}^+$ and ${\mathcal{P}}_{\lambda}^-$
 in our setting
 because the $H$-action on $G/P$ has two open orbits, 
 see \cite{toshima}.

\subsection{Fourier transform}
~~~
\newline
The Plancherel formula for $\operatorname{d S}^2 \simeq G/H$
 is known, 
 see \cite{xfar} for instance, 
 which contains both discrete and continuous spectrum:
\begin{equation}
\label{eqn:Pl}
  L^2(G/H) \simeq 
  \underset{\ell =0}{\overset{\infty}\sum}
\raisebox{2ex}{\text{\scriptsize{$\oplus$}}}
  (\pi_{2\ell+2}^+ \oplus \pi_{2\ell+2}^-)
  \oplus 2 \int_{(0,\infty)}^{\oplus} \varpi_{1+\sqrt{-1}\nu} d \nu.  
\end{equation}
In the right-hand side
 $\pi_{2\ell+2}^+$ is the holomorphic discrete series representation
 with minimal $K$-type $\chi_{2\ell+2}$, 
 and $\pi_{2\ell+2}^-$ is its contragredient representation.  
By an abuse of notation, 
 we write $\varpi_{1+\sqrt{-1} \nu}$
 for the spherical unitary principal series representation of $G$, 
 obtained as the unitarization
 of $\varpi_{1+\sqrt{-1} \nu}^+$ of $G$, 
 see \eqref{eqn:ps}.

In this section, 
 we discuss how the generating operator
 of the Rankin--Cohen brackets
 ({\bf{discrete data}})
 is connected with the {\bf{continuous spectrum}}
 in the Plancherel formula
 \eqref{eqn:Pl}
 of $\operatorname{d S}^2$.

As the dual of
 ${\mathcal{P}}_{2-\lambda}^{\pm}$, 
 we define the Fourier transform by
\[
  {\mathcal{F}}_{\lambda}^{\pm}\colon
  C_c^{\infty}(G/H)\to 
  C^{\infty}(G/P, {\mathcal{L}}_{\lambda}).  
\]

\begin{proposition}
[Fourier transform]
\label{prop:23070311}
${\mathcal{F}}_{\lambda}^{\pm}$ takes the form 
\begin{equation}
\label{eqn:23070311}
({\mathcal{F}}_{\lambda}^{\pm}h)(\zeta)
  =
  \int_{G/H}
  {\mathcal{K}}_{\mp}^{-\frac \lambda 2}(x,y,z;\zeta) h (x,y,z) d \mu_{G/H},  
\end{equation}
and one has
$
  {\mathcal{F}}_{\lambda}^{\pm} \circ \iota_1^{\ast}
  =
   T_{-\frac 1 2 \lambda}^{\pm}  
$
 (up to non-zero scalar multiple).  
\end{proposition}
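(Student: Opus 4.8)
The plan is to establish both assertions by unwinding the definition of ${\mathcal{F}}_{\lambda}^{\pm}$ as the transpose of ${\mathcal{P}}_{2-\lambda}^{\pm}$ and then translating the resulting $G/H$-integral into the Bruhat coordinates $(\zeta_1,\zeta_2)$ via Lemma \ref{lem:23070226} and Lemma \ref{lem:23062717}. The only genuine inputs are the explicit Poisson kernel of Proposition \ref{prop:23070306}, the invariant-measure formula, and the elementary fact that passing to a transpose leaves the integral kernel unchanged while interchanging the two integration variables.

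For the integral formula \eqref{eqn:23070311}, I would first fix the $G$-invariant pairings: on $G/P \simeq {\mathbb{R}}$ the bundle ${\mathcal{L}}_{\lambda}$ pairs with ${\mathcal{L}}_{2-\lambda}$ through $\langle F,\phi\rangle = \int_{\mathbb{R}} F(\zeta)\phi(\zeta)\,d\zeta$, which is $G$-invariant precisely because a fractional linear transformation contributes the Jacobian factor $(c\zeta+d)^{-2}$, forcing the exponents to sum to $2$; on $G/H$ one uses $d\mu_{G/H}$. The defining relation of the transpose reads
\[
\int_{\mathbb{R}} ({\mathcal{F}}_{\lambda}^{\pm}h)(\zeta)\,\phi(\zeta)\,d\zeta
= \int_{G/H} h\cdot({\mathcal{P}}_{2-\lambda}^{\pm}\phi)\,d\mu_{G/H}
\]
for $h\in C_c^{\infty}(G/H)$ and $\phi\in C^{\infty}(G/P,{\mathcal{L}}_{2-\lambda})$. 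Substituting the Poisson kernel of Proposition \ref{prop:23070306} and using $\tfrac{2-\lambda}{2}-1=-\tfrac{\lambda}{2}$, the right-hand side becomes a double integral whose order I interchange by Fubini; reading off the coefficient of $\phi(\zeta)$ yields exactly \eqref{eqn:23070311} with kernel ${\mathcal{K}}_{\mp}^{-\lambda/2}$.

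For the identity ${\mathcal{F}}_{\lambda}^{\pm}\circ\iota_1^{\ast}=T_{-\lambda/2}^{\pm}$ up to scalar, I would apply \eqref{eqn:23070311} to $\iota_1^{\ast}f=(\zeta_1-\zeta_2)f$ and rewrite the $G/H$-integral in the coordinates $(\zeta_1,\zeta_2)$. Here ${\mathcal{K}}_{\mp}^{-\lambda/2}$ is by definition the pull-back of $K_{\mp}^{-\lambda/2}(\zeta_1,\zeta_2;\zeta)$ in \eqref{eqn:23070230}, and by Lemma \ref{lem:23070226} one has $d\mu_{G/H}=\tfrac{2}{(\zeta_1-\zeta_2)^2}\,d\zeta_1 d\zeta_2$. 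The three $(\zeta_1-\zeta_2)$-factors collapse, $(\zeta_1-\zeta_2)\cdot\tfrac{2}{(\zeta_1-\zeta_2)^2}=\tfrac{2}{\zeta_1-\zeta_2}$, so the composite equals
\[
2\int_{{\mathbb{R}}^2} K_{\mp}^{-\lambda/2}(\zeta_1,\zeta_2;\zeta)\,f(\zeta_1,\zeta_2)\,\frac{d\zeta_1 d\zeta_2}{\zeta_1-\zeta_2},
\]
which is precisely the kernel of $T_{-\lambda/2}^{\pm}$ in Proposition \ref{prop:23070233} up to the nonzero factor $-4\pi\sqrt{-1}$; this gives the second claim.

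The main obstacle is not the algebra, which is the bookkeeping above, but the analytic justification: one must legitimize the interchange of integration in the Fubini step and the passage to the meromorphic continuation in $\lambda$, since ${\mathcal{K}}_{\mp}^{\mu}$ and the attached integrals are only distributions (hyperfunctions) depending meromorphically on $\mu$. For $\operatorname{Re}\lambda$ in a suitable range the kernels are locally integrable and Fubini applies directly; the general case then follows by the standard meromorphic-continuation argument already used for $T_{\mu}^{\pm}$ and ${\mathcal{P}}_{\lambda}^{\pm}$, so that the identities extend by uniqueness of analytic continuation. The $G$-intertwining property of ${\mathcal{F}}_{\lambda}^{\pm}$ requires no separate argument, since the transpose of the $G$-homomorphism ${\mathcal{P}}_{2-\lambda}^{\pm}$ is automatically $G$-equivariant.
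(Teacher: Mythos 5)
Your proposal is correct and follows exactly the route the paper intends: the paper itself omits the proof (it states that such verifications are left to standard techniques), and the proposition is meant to follow by unwinding the duality with ${\mathcal{P}}_{2-\lambda}^{\pm}$, the kernel exponent $\tfrac{2-\lambda}{2}-1=-\tfrac{\lambda}{2}$, Lemma \ref{lem:23070226} for $d\mu_{G/H}$, and Lemma \ref{lem:23062717} for $\iota_1^{\ast}$, precisely as you do. Your bookkeeping, including the cancellation $(\zeta_1-\zeta_2)\cdot\tfrac{2}{(\zeta_1-\zeta_2)^2}$ and the resulting scalar $-4\pi\sqrt{-1}$ relative to \eqref{eqn:23070233}, checks out.
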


In summary, 
 a countable set 
 of differential SBOs
 (the Rankin--Cohen brackets $\{R_{\ell}\}_{\ell \in {\mathbb{N}}}$)
 led us to the non-local operators
 ${\mathcal{F}}_{\lambda}^{\pm}$ (Fourier transforms) 
 in the framework 
 \lq\lq{from discrete to continuous}\rq\rq\
 via the \lq\lq{generating operator}\rq\rq\ $T$.  
The parameters $\lambda \in 1 + \sqrt{-1}{\mathbb{R}}$
 contribute to the continuous part
 of the Plancherel theorem \eqref{eqn:Pl}.

In Section \ref{subsec:RCdisc}, 
 we shall see that the Rankin--Cohen brackets again show up
 in dealing with the discrete part of \eqref{eqn:Pl}.

\section{Embedding of discrete series into principal series}
\label{sec:5}

Casselman's embedding theorem, 
 see {\it{e.g.,}} \cite{xWaII} tells us
 that every irreducible admissible representation
 of a real reductive group
 can be realized as a subrepresentation
 of some principal series representation.  
However, 
 this abstract theorem does not provide an explicit intertwining operator from 
 a geometric model 
of the irreducible representation
 into a principal series representation.

In this section, 
 we prove that the Rankin--Cohen brackets give geometric embeddings
 of discrete series representations
 of the de Sitter space $\operatorname{d S}^2$
 into principal series representations.  
Since the Rankin--Cohen brackets $R_{\ell}$
 involve the restriction to the diagonal submanifold $G/P$, 
 $R_{\ell}$ is not well defined initially 
 for functions on $\operatorname{d S}^2$
 because $G/P \,\cap\, \operatorname{d S}^2 = \emptyset$, 
 see \eqref{eqn:GPGP}.  
The key ingredients of the proof are
\begin{enumerate}
\item[$\bullet$]
$L^2(\operatorname{d S}^2) \simeq \widehat \pi \otimes \widehat \pi$, 
 see \eqref{eqn:unitarytensor} below, 

\item[$\bullet$]
the theory of admissible restrictions \cite{xkInvent98}, and
\item[$\bullet$]
the extension theorem of differential SBOs \cite{KP1}.  
\end{enumerate}

\subsection{Analytic extension from $\operatorname{d S}^2$ to $G/P \times G/P$}
~~~
\newline
We recall from \cite{xfar} 
 ({\it{cf}}.\ \eqref{eqn:Pl})
 that the space of $L^2$-eigenfunctions
 of the Laplacian splits into the sum
 of two irreducible representations
 of $G$:
\[
   L^2(G/H,{\mathcal{M}}_{2\ell+2}) 
   \simeq \pi_{2\ell+2}^+ \oplus \pi_{2\ell+2}^-
\quad
\text{for $\ell \in {\mathbb{N}}$.}
\]

Let $\pi$ denote the unitary principal series representation 
on the Hilbert space $L^2(G/P, {\mathcal{L}})$ 
where ${\mathcal{L}}={\mathcal{L}}_1^-$.  
The pull-back $\iota_{\lambda}^{\ast}$
 in Lemma \ref{lem:23062717} with $(\lambda, \varepsilon)=(1,-)$, 
\[
  f(\zeta_1, \zeta_2) \mapsto
  F(x, y, z)=\frac{2}{z-y}
  f(-\frac{y+z}{x+1}, -\frac{x+1}{z-y})
 = (\zeta_1-\zeta_2)f(\zeta_1,\zeta_2)
\]
 induces a unitary equivalence up to scaling:
\begin{equation}
\label{eqn:unitarytensor}
 \iota_1^{\ast} \colon 
 L^2(G/P, {\mathcal{L}})\widehat \otimes
 L^2(G/P, {\mathcal{L}})
  \overset \sim \rightarrow L^2(G/H)
\end{equation}
 because $G/H$ is conull in $G/P \times G/P$.

Thus the fusion rule of the left-hand side 
({\it{cf}}.\ Repka \cite{Re79})
 is equivalent to the Plancherel formula
 of $\operatorname{d S}^2$
 given in \eqref{eqn:Pl}.

The following theorem is a key 
 to the proof of Theorem \ref{thm:23062904}
 for an explicit embedding of discrete series representations.  
We note 
 that an analogous extension statement 
is not true 
 if we drop the square integrability assumption 
 of eigenfunctions in Theorem \ref{thm:230626_SB}.

\begin{theorem}
\label{thm:230626_SB}
Any $K$-finite function 
 of the discrete series for $G/H$
 extends to a real analytic section for ${\mathcal{L}} \boxtimes {\mathcal{L}}$
 over $G/P \times G/P$
 via \eqref{eqn:unitarytensor}.  
\end{theorem}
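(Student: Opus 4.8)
The plan is to reduce the statement to a single lowest (resp.\ highest) weight vector in each discrete summand and then to exhibit the extension explicitly in Bruhat coordinates. First, I would reduce to the minimal $K$-type: the diagonal $G$-action on $G/P\times G/P$ is real analytic and preserves ${\mathcal{L}}\boxtimes{\mathcal{L}}$, so ${\mathfrak{g}}$ acts on sections by first-order differential operators carrying real analytic sections of ${\mathcal{L}}\boxtimes{\mathcal{L}}$ over all of $G/P\times G/P$ to sections of the same kind, compatibly with restriction to the open orbit $\operatorname{dS}^2$. Since $\pi_{2\ell+2}^{+}$ is a lowest weight module, every $K$-finite vector is $U({\mathfrak{g}}_{\mathbb{C}})$ applied to the lowest weight vector; hence it suffices to extend the latter, and likewise the highest weight vector of $\pi_{2\ell+2}^{-}$. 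That these summands genuinely occur as lowest/highest weight submodules of $\pi\otimes\pi$, and that their $K$-finite vectors are analytic vectors admitting the analytic continuation we seek, is what the discrete decomposability of the restriction provides, cf.\ Fact \ref{fact:disctensor} and \cite{xkInvent98}.

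Second, I would compute the lowest weight vector. In the coordinates $(x,y,z)=(\cosh t\cos\theta,\cosh t\sin\theta,\sinh t)$ of Lemma \ref{lem:23070226} the minimal $K$-type $\chi_{2\ell+2}$ forces the angular dependence $e^{\sqrt{-1}(2\ell+2)\theta}$, and the lowest weight (holomorphicity) condition together with the eigenvalue equation $\Delta F=-\frac14(2\ell+2)(2\ell)F$ of \eqref{eqn:23072114} pins down the radial part uniquely. Passing to $(\zeta_1,\zeta_2)$ through \eqref{eqn:23062714}, \eqref{eqn:23062715} and Lemma \ref{lem:23062717}, so that the associated section is $f=F/(\zeta_1-\zeta_2)$, one finds up to a constant
\[
  f(\zeta_1,\zeta_2)
  =\frac{(\zeta_1-\zeta_2)^{2\ell+1}}
        {\bigl((\zeta_1-\sqrt{-1})(\zeta_2-\sqrt{-1})\bigr)^{2\ell+2}},
\]
with the complex conjugate expression for the highest weight vector of $\pi_{2\ell+2}^{-}$.

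Third, I would read off the conclusion. For real $\zeta_1,\zeta_2$ the denominator never vanishes, so the only possible obstruction---the factor $(\zeta_1-\zeta_2)^{-1}$ built into $\iota_1^{\ast}$, which is singular along the closed orbit $G/P=\{\zeta_1=\zeta_2\}$ in \eqref{eqn:GPGP}---is already cancelled: $f$ vanishes to order $2\ell+1$ on the diagonal. Thus $f$ is real analytic in a full neighborhood of the closed orbit, and a routine check at $\zeta_j=\infty$ using the multiplier of $\varpi_1^{-}$ shows that it is a genuine real analytic section of ${\mathcal{L}}\boxtimes{\mathcal{L}}$ over all of $G/P\times G/P$; by the first step this passes to every $K$-finite vector.

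The main obstacle is precisely this crossing of the closed orbit $G/P$: a discrete series function lives a priori only on $\operatorname{dS}^2$, and the continuous-spectrum part of \eqref{eqn:Pl} need not extend across $\{\zeta_1=\zeta_2\}$ at all. What singles out the discrete part is the lowest/highest weight (holomorphic) structure supplied by discrete decomposability, which accounts both for the vanishing on the diagonal seen above and, conceptually, for the extension itself---the same mechanism underlying the extension theorem for differential SBOs \cite{KP1} used after Proposition \ref{prop:23070233} to extend the Rankin--Cohen brackets $R_\ell$ across $G/P$. If one prefers to bypass the explicit radial computation, that extension theorem furnishes an alternative: identify the embedding operator as the transpose of the extended Rankin--Cohen brackets and deduce real analyticity of its image directly.
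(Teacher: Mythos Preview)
Your strategy---reduce to the lowest weight vector via the $U({\mathfrak{g}}_{\mathbb{C}})$-module structure, exhibit it explicitly, check it extends, then propagate by differential operators---is valid but different from the paper's. The paper argues more abstractly: it splits $\pi\simeq{\mathbb{H}}(\Pi_+)\oplus{\mathbb{H}}(\Pi_-)$ into Hardy spaces, observes that every discrete summand of $L^2(G/H)$ sits inside ${\mathbb{H}}(\Pi_\varepsilon)\,\widehat\otimes\,{\mathbb{H}}(\Pi_\varepsilon)$, and then uses $K$-admissibility of this tensor product together with \cite{xkInvent98} to conclude that any vector finite for the \emph{diagonal} $K$ is automatically $(K\times K)$-finite; a $(K\times K)$-finite vector is manifestly a real analytic section on the compact manifold $G/P\times G/P$. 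No explicit formula is needed. Your route buys concreteness (one sees the section and its vanishing on the diagonal directly); the paper's buys generality, avoids solving the radial eigenvalue equation, and makes transparent why the continuous spectrum is excluded---it lives in ${\mathbb{H}}(\Pi_+)\,\widehat\otimes\,{\mathbb{H}}(\Pi_-)$, which is not $K$-admissible.

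Your explicit formula, however, is off by a factor of two in the exponents. In the coordinates of Lemma~\ref{lem:23070226} the adjoint action of $k_\phi\in K$ rotates the $(x,y)$-plane by angle $2\phi$, not $\phi$, so $K$-type $\chi_{2\ell+2}$ forces angular dependence $e^{\pm\sqrt{-1}(\ell+1)\theta}$ rather than $e^{\pm\sqrt{-1}(2\ell+2)\theta}$. The correct minimal $K$-type vector (up to scalar and a sign convention on $\pm\sqrt{-1}$) is
\[
   (\zeta_1-\zeta_2)^{\ell}\,(\zeta_1\pm\sqrt{-1})^{-\ell-1}(\zeta_2\pm\sqrt{-1})^{-\ell-1},
\]
with exponents $\ell$ and $\ell+1$ rather than your $2\ell+1$ and $2\ell+2$; what you wrote is actually the minimal $K$-type of $\pi_{4\ell+4}^{\pm}$. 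With this correction your third step goes through verbatim: the denominator has no real zeros, the factor $(\zeta_1-\zeta_2)^{\ell}$ absorbs the $(\zeta_1-\zeta_2)^{-1}$ built into $\iota_1^{\ast}$, and the decay at $\zeta_j=\infty$ matches the multiplier of $\varpi_1^{-}$.
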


\begin{proof}
By \eqref{eqn:unitarytensor}, 
 the Plancherel formula for $G/H$ may be 
interpreted
 as the fusion rule of $\pi \widehat \otimes \pi$.  
Let ${\mathbb{H}}(\Pi_+)$ and ${\mathbb{H}}(\Pi_-)$
 denote the Hardy space
 for the upper half plane $\Pi_+$
 and the lower one $\Pi_-$, 
 respectively.  
Then one has a unitary equivalence
$\pi \simeq {\mathbb{H}}(\Pi_+) \oplus {\mathbb{H}}(\Pi_-)$, 
 and the discrete part and the continuous part in \eqref{eqn:Pl}
 are explained as 
\begin{align}
\label{eqn:HH}
   {\mathbb{H}}(\Pi_{\varepsilon}) \,\widehat\otimes \, {\mathbb{H}}(\Pi_{\varepsilon})
   \,\,\simeq\,\,& {\sum_{\ell=0}^{\infty}}
\raisebox{2ex}{\text{\scriptsize{$\oplus$}}}
   \pi_{2\ell+2}^{\varepsilon}
\qquad
 \text{$\varepsilon=+$ or $-$}, 
\\
\notag
   {\mathbb{H}}(\Pi_+)\, \widehat\otimes \,{\mathbb{H}}(\Pi_-)
   \,\,\simeq\,\,& \int_{(0,\infty)}^{\oplus} \varpi_{1+\sqrt{-1}\nu} d \nu.  
\end{align}
The tensor product 
 ${\mathbb{H}}(\Pi_+)\,\widehat\otimes\, {\mathbb{H}}(\Pi_-)$ is unitarily isomorphic 
 to $L^2(G/K)$, 
 and does not contain discrete spectrum 
 in the fusion rule.  
On the other hand, 
 any discrete series for $\operatorname{d S}^2$
 arises from the $K$-admissible tensor product 
 ${\mathbb{H}}(\Pi_\varepsilon) \,\widehat \otimes\, {\mathbb{H}}(\Pi_\varepsilon)$
 (\cite{xkAnn98}), 
 hence any $K$-finite vector $f$ becomes $(K \times K)$-finite
 by \cite{xkInvent98}.  
Since the direct product group $K \times K$ acts transitively 
 on $G/P \times G/P$, 
 the function $f \in L^2(G/H)$ extends to a real analytic section $\widetilde f$
 over $G/P \times G/P$ via \eqref{eqn:unitarytensor}.  
\end{proof}

\begin{example}
\label{ex:52}
Let $f_{\ell}$ be a function on $\operatorname{dS}^2 \simeq G/H$ given by
\[
    f_{\ell}^{\pm}(x,y,z)
  :=\left(\frac{\sqrt{-1}}{x\pm\sqrt{-1} y}\right)^{\ell+1}.  
\]
Then it belongs to a $K$-finite function 
 in $L^2(G/H, {\mathcal{M}}_{2 \ell+2})$, 
 giving a minimal $K$-type in $\pi_{2\ell+2}^{\pm}$, 
 and extends to an analytic section
\[
   \widetilde{f_{\ell}^{\pm}}(\zeta_1, \zeta_2) 
  =(\zeta_1-\zeta_2)^{\ell}(\zeta_1 \pm \sqrt{-1})^{-\ell-1}
   (\zeta_2\pm\sqrt{-1})^{-\ell-1}, 
\]
 for the line bundle ${\mathcal{L}} \boxtimes {\mathcal{L}}$
 over $G/P \times G/P$, 
 by \eqref{eqn:23062715} and \eqref{eqn:23062714}.

As shown in \cite[Prop.\ 2.28]{KPinv}, 
 $\widetilde{f_{\ell}^{+}}$ gives a minimal $K$-type of $\pi_{2\ell+2}^+$
 in the decomposition \eqref{eqn:HH}.  
Likewise for $\widetilde{f_{\ell}^-}$ in $\pi_{2\ell+2}^-$.  

\end{example}

\subsection{Embedding of discrete series for $\operatorname{d S}^2$}
\label{subsec:RCdisc}
~~~
\newline
We recall from \eqref{eqn:GPGP}
 that $\operatorname{d S}^2$ is realized
 as an open dense subset of $G/P \times G/P$, 
 with the boundary being isomorphic to $\operatorname{diag}(G/P)$.  
\begin{theorem}
[embedding of discrete series]
\label{thm:23062904}
The Rankin--Cohen brackets $R_{\ell}$ induces
 an injective $({\mathfrak{g}}, K)$-homomorphism from discrete series representations
 $\pi_{2\ell+2}^+$ and $\pi_{2\ell+2}^-$
 for the de Sitter space $\operatorname{d S}^2$
 into the principal series representation $C^{\infty}(G/P, {\mathcal{L}}_{2\ell+2})$
 for every $\ell \in {\mathbb{N}}$.  
\end{theorem}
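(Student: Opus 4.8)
The plan is to realize the embedding as the composition of the analytic extension furnished by Theorem~\ref{thm:230626_SB} with the Rankin--Cohen bracket $R_\ell$, thereby circumventing the fact that $R_\ell$, being defined by restriction to the diagonal $G/P$, cannot be applied to functions on $\operatorname{dS}^2$ directly. Concretely, I would start from a $K$-finite vector $F$ of the discrete series $\pi_{2\ell+2}^{\pm}\subset L^2(G/H)$. Via the unitary equivalence \eqref{eqn:unitarytensor} and the fusion rule \eqref{eqn:HH}, $\pi_{2\ell+2}^{\pm}$ is identified with a subrepresentation of $\pi\,\widehat\otimes\,\pi$, and Theorem~\ref{thm:230626_SB} guarantees that $F$ corresponds to a $(K\times K)$-finite, hence real analytic, section $\widetilde F\in C^{\infty}(G/P\times G/P,\mathcal L\boxtimes\mathcal L)$ defined on a full neighborhood of the diagonal. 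Applying $R_\ell$, which extends to a differential SBO on $G/P\times G/P$ by \cite[Thm.\ B]{KP1} with target $C^{\infty}(G/P,\mathcal L_{2\ell+2})$ as recorded in \eqref{eqn:restT}, I obtain the candidate map $F\mapsto R_\ell\widetilde F$.

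Next I would check that this assignment is a $(\mathfrak g,K)$-homomorphism. The step $F\mapsto\widetilde F$ is nothing but the inclusion of $\pi_{2\ell+2}^{\pm}$ as a subrepresentation of $\pi\,\widehat\otimes\,\pi$ under the $G$-isomorphism \eqref{eqn:unitarytensor}, so on $K$-finite vectors it is automatically $(\mathfrak g,K)$-equivariant; since $R_\ell$ is $G$-intertwining by its SBO property, so is the composition. It then remains only to prove injectivity, and because $\pi_{2\ell+2}^{\pm}$ is irreducible, the homomorphism is injective as soon as it is nonzero.

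I would establish nonvanishing by evaluating on the explicit minimal $K$-type $\widetilde{f_\ell^{\pm}}=(\zeta_1-\zeta_2)^{\ell}(\zeta_1\pm\sqrt{-1})^{-\ell-1}(\zeta_2\pm\sqrt{-1})^{-\ell-1}$ produced in the preceding Example. Writing $\widetilde{f_\ell^{+}}=(\zeta_1-\zeta_2)^{\ell}g$ with $g=(\zeta_1+\sqrt{-1})^{-\ell-1}(\zeta_2+\sqrt{-1})^{-\ell-1}$ and inserting this into \eqref{eqn:RC}, one observes that a Leibniz term survives the restriction to $\zeta_1=\zeta_2=\zeta$ only when all $\ell$ derivatives fall on the factor $(\zeta_1-\zeta_2)^{\ell}$, since otherwise a positive power of $\zeta_1-\zeta_2$ remains and vanishes there; the surviving contribution equals $\ell!\,(-1)^{j}g(\zeta,\zeta)$, so summing gives
\[
(R_\ell\widetilde{f_\ell^{+}})(\zeta)=\ell!\Big(\sum_{j=0}^{\ell}\binom{\ell}{j}^{2}\Big)(\zeta+\sqrt{-1})^{-2\ell-2}=\ell!\binom{2\ell}{\ell}(\zeta+\sqrt{-1})^{-2\ell-2},
\]
a nonzero (indeed minimal $K$-type) vector of $C^{\infty}(G/P,\mathcal L_{2\ell+2})$, and likewise for $\pi_{2\ell+2}^{-}$ with $\widetilde{f_\ell^{-}}$. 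This forces injectivity. I expect the genuine obstacle to lie not in this final computation but in legitimizing the application of $R_\ell$ at all, i.e.\ in the passage from $\operatorname{dS}^2$ to $G/P\times G/P$; this is precisely the content of Theorem~\ref{thm:230626_SB}, whose proof in turn rests on the admissibility of the restriction \cite{xkInvent98} and the extension theorem for differential SBOs \cite{KP1}.
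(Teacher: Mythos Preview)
Your proposal is correct and follows essentially the same route as the paper's own proof: extend $K$-finite discrete series vectors across the diagonal to $G/P\times G/P$ via Theorem~\ref{thm:230626_SB}, apply the differential SBO $R_\ell$, and deduce injectivity from irreducibility by checking nonvanishing on the minimal $K$-types $\widetilde{f_\ell^{\pm}}$. Your explicit Leibniz computation gives $(R_\ell\widetilde{f_\ell^{+}})(\zeta)=\ell!\binom{2\ell}{\ell}(\zeta+\sqrt{-1})^{-2\ell-2}=\frac{(2\ell)!}{\ell!}(\zeta+\sqrt{-1})^{-2\ell-2}$, which is exactly the value the paper quotes from \cite[Ex.~3.9]{KPgen}.
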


\begin{proof}
Any $K$-finite function $f$ in $L^2(G/H, {\mathcal{M}}_{2\ell+2})$ extends
 to a real analytic section $\widetilde f$
 for the line bundle ${\mathcal{L}} \boxtimes {\mathcal{L}} \to G/P \times G/P$
 by Theorem \ref{thm:230626_SB}.  
Therefore $f \mapsto R_{\ell} \widetilde f$ is a well-defined 
 $({\mathfrak{g}}, K)$-homomorphism from 
$
  L^2(G/H, {\mathcal{M}}_{2\ell+2})_K 
$ 
 to $C^{\infty}(G/P, {\mathcal{L}}_{2 \ell+2})_K$.

Finally, 
 let us prove
 that this map is injective.  
Since $
  L^2(G/H, {\mathcal{M}}_{2\ell+2})
$
 splits into irreducible representations 
 $\pi_{2\ell+2}^+$ and $\pi_{2\ell+2}^-$, 
 it suffices to show
\begin{equation}
\label{eqn:23062618}
R_{\ell} \widetilde{f_{\ell}^+} \ne 0, 
\quad
R_{\ell} \widetilde{f_{\ell}^-} \ne 0, 
\end{equation}
 where $f_{\ell}^{\pm}$ are defined 
 in Example \ref{ex:52}.  
Then the assertion \eqref{eqn:23062618} holds
 because 
\[
   (R_{\ell} \widetilde{f_{\ell}^+})(\zeta)=\frac{(2 \ell)!}{\ell !}(\zeta+\sqrt{-1})^{-2\ell-2} \ne 0, 
\]
 see \cite[Ex.\ 3.9]{KPgen}, 
 and likewise for $R_{\ell}\widetilde{f_{\ell}^-}$.  
\end{proof}

\section{Appendix: Hyperfunctions and the Riemann--Liouville integral}
\label{sec:Append}

Our key idea from \lq\lq{discrete}\rq\rq\ to \lq\lq{continuous}\rq\rq\
 in Section \ref{sec:SL}
 is to use the fractional power of normal derivative
 \eqref{eqn:tTRl}.  
In order to implement the classical idea 
 of the Riemann--Liouville integral 
 into the \lq\lq{generating operators}\rq\rq, 
 we utilize the theory of hyperfunctions.  
\begin{lemma}
\label{lem:23062118}
The following formul{\ae} hold as a meromorphic continuation
 of $\lambda \in {\mathbb{C}}$
 and an analytic continuation of $w \in {\mathbb{C}}$:
\begin{alignat*}{2}
\langle t_+^{\lambda}, \frac{1}{t+w}\rangle
=&
-\frac{\pi w^{\lambda}}{\sin \pi \lambda}
\quad
&&\text{if $w \not\in (-\infty, 0]$, }
\\
\langle t_-^{\lambda}, \frac{1}{t+w}\rangle
=&
\frac{\pi (-w)^{\lambda}}{\sin \pi \lambda}
\quad
&&\text{if $w \not\in [0, \infty)$. }
\end{alignat*}
\end{lemma}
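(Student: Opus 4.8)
The plan is to reduce both identities to the classical Mellin transform of $(1+t)^{-1}$ and then propagate the equality by analytic continuation, first in $w$ and afterwards in $\lambda$. I would start with the first formula in the strip $-1<\operatorname{Re}\lambda<0$, where $t_+^{\lambda}$ is an honest locally integrable function. Since $w\notin(-\infty,0]$ the function $1/(t+w)$ is smooth on $[0,\infty)$ and decays like $t^{-1}$ as $t\to\infty$, so in this strip the pairing is the absolutely convergent integral $\langle t_+^{\lambda},(t+w)^{-1}\rangle=\int_0^{\infty} t^{\lambda}(t+w)^{-1}\,dt$. For $w>0$ real the scaling $t=wu$ pulls out the power and reduces this to $w^{\lambda}\int_0^{\infty} u^{\lambda}(1+u)^{-1}\,du$.

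Next I would evaluate the base integral as a Beta integral: $\int_0^{\infty} u^{\lambda}(1+u)^{-1}\,du=B(\lambda+1,-\lambda)=\Gamma(\lambda+1)\Gamma(-\lambda)$, which by Euler's reflection formula equals $\pi/\sin\pi(\lambda+1)=-\pi/\sin\pi\lambda$. This already establishes $\langle t_+^{\lambda},(t+w)^{-1}\rangle=-\pi w^{\lambda}/\sin\pi\lambda$ for $w>0$ and $-1<\operatorname{Re}\lambda<0$, with $w^{\lambda}$ the ordinary positive power. To reach an arbitrary $w\notin(-\infty,0]$ I would fix $\lambda$ in the strip and apply the identity theorem in $w$: the left-hand side is holomorphic on the cut plane ${\mathbb{C}}\setminus(-\infty,0]$ (differentiate under the integral sign; the integrand is holomorphic there and the differentiated integral still converges at both ends), while the right-hand side is holomorphic once $w^{\lambda}$ is the principal branch normalized by $w^{\lambda}>0$ on $(0,\infty)$. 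As the two sides agree on the ray $(0,\infty)$, they agree on the whole cut plane.

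I would then continue in $\lambda$. The distribution $t_+^{\lambda}$ extends meromorphically in $\lambda$ to all of ${\mathbb{C}}$ (the classical fact recalled just before \eqref{eqn:23072202}), so the left-hand side is meromorphic in $\lambda$, and $-\pi w^{\lambda}/\sin\pi\lambda$ is manifestly meromorphic with simple poles on ${\mathbb{Z}}$; agreeing on the strip, they coincide everywhere by the identity theorem. Finally I would deduce the second identity from the first by the change of variables $t\mapsto -t$, which carries $t_-^{\lambda}$ to $t_+^{\lambda}$ and the integrand $1/(t+w)$ to $1/(w-t)=-1/(t+(-w))$; hence the pairing equals $-\langle t_+^{\lambda},(t+(-w))^{-1}\rangle=-(-\pi(-w)^{\lambda}/\sin\pi\lambda)=\pi(-w)^{\lambda}/\sin\pi\lambda$, valid when $-w\notin(-\infty,0]$, that is $w\notin[0,\infty)$.

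The evaluation of the Beta integral is entirely classical; the part demanding care is the bookkeeping of the pairing and of branches. One must justify that pairing the tempered distribution $t_{\pm}^{\lambda}$ against the non-Schwartz function $1/(t+w)$ is legitimately computed by the convergent integral in the strip and by meromorphic continuation outside it, and one must track the principal branch of $w^{\lambda}$ consistently through the analytic continuation in $w$, so that the stated formulas hold with the correct single-valued power. This branch-and-continuation bookkeeping is the only genuine obstacle.
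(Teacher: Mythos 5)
Your proposal is correct and follows essentially the same route as the paper: evaluate the pairing in the strip $-1<\operatorname{Re}\lambda<0$ via the Beta integral $B(\lambda+1,-\lambda)=-\pi/\sin\pi\lambda$, then extend in $w$ to the cut plane and meromorphically in $\lambda$. The only cosmetic differences are that the paper passes from $w>0$ to $\operatorname{Re}w>0$ by rotating the contour with Cauchy's theorem where you invoke the identity theorem in $w$, and that you derive the second formula from the first by the substitution $t\mapsto -t$ where the paper simply notes the argument is similar.
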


\begin{proof}
Suppose $-1 < \operatorname{Re}\lambda < 0$.  
Then the following integral converges to the Beta function:
\[
  \int_0^{\infty} \frac{t^{\lambda}}{t+1} d t=
  B(\lambda+1, -\lambda)
  =\frac{-\pi}{\sin \pi \lambda}.  
\]
Suppose $w \in {\mathbb{C}}$
 with $\operatorname{Re}w >0$.  
Then the change of variables yields 
\[
\int_0^{\infty} \frac{t^{\lambda}}{t+w} d t=
 \int_\gamma \frac{(sw)^{\lambda}}{s+1}ds,
\]
 where the path $\gamma$ is given by $\{\frac t w: 0 \le t < \infty\}$.  
By the Cauchy integral formula, 
 one sees readily
 that the integral does not change
 if we replace the path $\gamma$ with $[0,\infty)$.  
Hence the first equality holds initially 
defined as the convergent integral 
 for $-1 <\operatorname{Re}\lambda<0$
 and $\operatorname{Re}w>0$, 
 and extends meromorphically 
 in $w \in {\mathbb{C}} \setminus (-\infty, 0]$
 and $\lambda \in {\mathbb{C}}$.  

The proof of the second statement is similar.  
\end{proof}

The sheaf ${\mathcal{B}}$ of hyperfunctions is defined
 as local cohomology group.  
In one dimensional case, 
 for an open set $U$ in ${\mathbb{R}}$, 
 ${\mathcal{B}}(U) \simeq {\mathcal{O}}(\widetilde U \setminus U)/{\mathcal{O}}(\widetilde U)$
 where $\widetilde U$ is any open set in ${\mathbb{C}}$
 containing $U$, 
 and this definition does not depend on the choice of $\widetilde U$
 \cite{xsato}.

Then $w^{\lambda} \in {\mathcal{O}}({\mathbb{C}} \setminus (-\infty,0])$
defines a hyperfunction
\[
(e^{\sqrt{-1}\pi \lambda}-e^{-\sqrt{-1}\pi \lambda})w_-^{\lambda}
=2\sqrt{-1} \sin \pi \lambda w_-^{\lambda}
\]
 as a \lq\lq{boundary value}\rq\rq\ \cite{xsato}, 
 and 
$(-w)^{\lambda}= (e^{-\sqrt{-1}\pi} w)^{\lambda}\in {\mathcal{O}}({\mathbb{C}} \setminus [0, \infty))$ defines
\[
   (e^{-\sqrt{-1}\pi \lambda}-e^{\sqrt{-1}\pi \lambda})w_+^{\lambda}
=-2\sqrt{-1} \sin \pi \lambda w_+^{\lambda}.  
\]

Hence Lemma \ref{lem:23062118} may be reinterpreted as below.  
\begin{lemma}
\label{lem:23062411}
As hyperfunctions that depend meromorphically on $\lambda \in {\mathbb{C}}$, 
 one has the following equations.  
\begin{equation*}
\langle t_{\pm}^{\lambda}, \frac{1}{t+w}\rangle
=
-2 \pi \sqrt{-1} w_{\mp}^{\lambda}.  
\end{equation*}
\end{lemma}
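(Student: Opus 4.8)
The plan is to obtain this hyperfunction identity as an immediate reinterpretation of the meromorphic formula of Lemma \ref{lem:23062118}, by passing from the pairing viewed as a \emph{holomorphic function of $w$} on a slit plane to the \emph{hyperfunction of $w$} that this function represents as a boundary value. The two boundary-value formulas recorded just before the statement provide exactly the dictionary needed to carry out this passage.

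First I would record from Lemma \ref{lem:23062118} that, for the upper sign, $\langle t_+^{\lambda}, \frac{1}{t+w}\rangle = -\pi w^{\lambda}/\sin\pi\lambda$, which is holomorphic in $w$ on $\mathbb{C}\setminus(-\infty,0]$, and that for the lower sign, $\langle t_-^{\lambda}, \frac{1}{t+w}\rangle = \pi(-w)^{\lambda}/\sin\pi\lambda$, holomorphic on $\mathbb{C}\setminus[0,\infty)$. In each case the representative extends holomorphically across one half of the real $w$-axis, so the hyperfunction in $w\in\mathbb{R}$ that it defines is supported on the complementary half-line, which is precisely the support of $w_-^{\lambda}$ in the first case and of $w_+^{\lambda}$ in the second.

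Next I would feed in the boundary-value formulas established in the preceding paragraph of the Appendix: the function $w^{\lambda}$ on $\mathbb{C}\setminus(-\infty,0]$ represents the hyperfunction $2\sqrt{-1}\sin\pi\lambda\, w_-^{\lambda}$, while $(-w)^{\lambda}$ on $\mathbb{C}\setminus[0,\infty)$ represents $-2\sqrt{-1}\sin\pi\lambda\, w_+^{\lambda}$. Substituting these into the two expressions above, the factor $\sin\pi\lambda$ supplied by the boundary value cancels the $1/\sin\pi\lambda$ coming from the Beta-function evaluation in Lemma \ref{lem:23062118}, and one is left with $-2\pi\sqrt{-1}\,w_-^{\lambda}$ for the upper sign and $-2\pi\sqrt{-1}\,w_+^{\lambda}$ for the lower sign. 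This is exactly the asserted identity $\langle t_{\pm}^{\lambda}, \frac{1}{t+w}\rangle = -2\pi\sqrt{-1}\,w_{\mp}^{\lambda}$.

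The hard part will be getting this cancellation of $\sin\pi\lambda$ and the resulting meromorphic bookkeeping exactly right rather than the bare algebra. The holomorphic representative $-\pi w^{\lambda}/\sin\pi\lambda$ carries spurious poles at \emph{every} integer $\lambda$, whereas the hyperfunction it defines should inherit only the poles of $w_{\mp}^{\lambda}$ at negative integers; the boundary-value operation supplies precisely the compensating zero $2\sqrt{-1}\sin\pi\lambda$ that removes the extraneous poles. I would therefore track with care the branch of $w^{\lambda}$ used on each side of the cut, so that the jump across the half-line is computed with the correct phases $e^{\pm\sqrt{-1}\pi\lambda}$, and then confirm that both sides depend meromorphically on $\lambda$ with matching poles and residues, which certifies the equality of hyperfunctions for all $\lambda\in\mathbb{C}$ by analytic continuation.
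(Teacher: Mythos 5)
Your proposal is correct and follows exactly the paper's route: the paper derives Lemma \ref{lem:23062411} by combining Lemma \ref{lem:23062118} with the boundary-value identities $w^{\lambda}\mapsto 2\sqrt{-1}\sin\pi\lambda\,w_-^{\lambda}$ and $(-w)^{\lambda}\mapsto -2\sqrt{-1}\sin\pi\lambda\,w_+^{\lambda}$, so that the $\sin\pi\lambda$ factors cancel, which is precisely your computation. Your additional remarks on the cancellation of spurious integer poles and on tracking the phases $e^{\pm\sqrt{-1}\pi\lambda}$ are sound and only make explicit what the paper leaves implicit.
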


\vskip 1pc
\par\noindent
{\bf{Acknowledgement.}}
\newline
The article is an outgrowth of the invited address
 at the Nordic Congress of Mathematicians
 held on July, 2023 in Denmark.  
The author expresses his gratitude
 to J.\ Frahm, M.\ Pevzner, and G.\ Zhang for their warm hospitality.  
He also thanks Professor Kubo 
 for reading carefully the first draft, 
 and anonymous referees for their comments.  
This work was partially supported by the JSPS
 under the Grant-in Aid for Scientific Research (A) 
 (JP18H03669, JP23H00084).  

\vskip 1pc
\par\noindent
{\bf{Data availability. }}
\newline
Data sharing not applicable to this work 
 as no datasets were generated or analysed 
 during the current study.  

\vskip 1pc
\par\noindent
{\bf{Statement on conflict if interest.}}
\newline
On behalf of all authors, 
 the corresponding author states
 that there is no conflict of interest.

\vskip 3pc
\leftline
{Toshiyuki KOBAYASHI}
\leftline
{Graduate School of Mathematical Sciences, }
\leftline
{The University of Tokyo,
 3-8-1 Komaba, Meguro, 
Tokyo, 153-8914, Japan.}
\leftline
{toshi@ms.u-tokyo.ac.jp}
\end{document}